\theoremstyle{definition}
\newtheorem{theorem}{Theorem}[section]
\newtheorem{lemma}[theorem]{Lemma}
\newtheorem{corollary}[theorem]{Corollary}
\newtheorem{definition}[theorem]{Definition}
\newtheorem{example}[theorem]{Example}
\DeclareMathOperator{\Inn}{Inn}
\DeclareMathOperator{\Aut}{Aut}
\DeclareMathOperator{\GL}{GL}
\DeclareMathOperator{\SL}{SL}
\DeclareMathOperator{\SO}{SO}
\DeclareMathOperator{\Sp}{Sp}
\DeclareMathOperator{\Id}{Id}
\DeclareMathOperator{\ch}{Char}
\DeclareMathOperator{\diag}{Diag}
\DeclareMathOperator{\Orth}{O}
\newcommand{\bbr}{\mathbb{R}}
\newcommand{\bbc}{\mathbb{C}}
\newcommand{\bbf}{\mathbb{F}}
\newcommand{\ol}[1]{\overline{#1}}
\newcommand{\st}{\ |\ }
\newcommand{\Lx}[1]{\begin{bsmallmatrix} 0 & 1\\ #1 & 0 \end{bsmallmatrix}}
\newcommand{\Ux}[1]{\begin{bsmallmatrix} 1 & 0\\#1 & 1\end{bsmallmatrix}}
\newcommand{\slnk}{\SL(n,k)}
\newcommand{\lnp}{L_{\frac{n}{2},p}}
\newcommand{\lmc}{L_{m,c^2,c}}
\title{$k$-involutions of $\slnk$ over Fields of Characteristic 2}
\author{Nathaniel J. Schwartz}
\date{\today}
\thanks{The author thanks A. Helminck for his help and guidance.}
\begin{document}

\begin{abstract}
Symmetric $k$-varieties generalize Riemannian sym\-me\-tric spaces to reductive groups defined over arbitrary fields. For most perfect fields, it is known that symmetric $k$-varieties are in one-to-one correspondence with isomorphy classes of $k$-involutions. Therefore, it is useful to have representatives of each isomorphy class in order to describe the $k$-varieties. Here we give matrix representatives for each isomorphy class of $k$-involutions of $\slnk$ in the case that $k$ is any field of characteristic 2; we also describe fixed point groups of each type of involution.
\end{abstract}

\maketitle

\section{Introduction}
%======================================================================
Let $G$ be a connected reductive linear algebraic group defined over a field $k$, and let $\varphi$ be an involution of $G$, $H$ a $k$-open subgroup of the fixed-point group of $\varphi$, and let $G_k$ (resp. $H_k$) be the set of $k$-rational points of $G$ (resp. $H$). The homogeneous space $G_k/H_k$ is called a symmetric $k$-variety. Symmetric $k$-varieties generalize the notion of Riemannian symmetric spaces of Lie groups defined over $\bbr$ or $\bbc$ to reductive linear algebraic groups defined over arbitrary fields $k$ which may be non-algebraically closed. Symmetric $k$-varieties are important in representation theory and they arise in a variety of other areas.

In 1994, Helminck described isomorphy classes of symmetric $k$-va\-ri\-e\-ties in \cite{hel94}. One of the significant results he states is that isomorphy classes of symmetric $k$-varieties are in one to one correspondence with isomorphy classes of $k$-involutions of the associated algebraic groups. In the same paper, Helminck also gave a characterization of isomorphism classes of $k$-involutions which uses the following three invariants:
\begin{enumerate}
\item Classification of admissible $(\Gamma, \varphi)$-indices.
\item Classification of the $G_k$-isomorphism classes of $k$-involutions of the $k$-anisotropic kernel of $G$.
\item Classification of the $G_k$-isomorphism classes of $k$-inner elements of $G$.
\end{enumerate}
In 1998, Ling Wu classified and characterized the equivalence classes of $k$-involutions of $\slnk$ for algebraically closed fields, $\mathfrak{p}$-adic fields, and finite fields of odd characteristic (see \cite{wu02}). Here, we present a similar classification and characterization in the case that our field $k$ is any field of characteristic 2. 

Following is a brief overview of the contents of this paper. We begin by determining, up to isomorphism, involutions of $\SL(2,k)$. In this case every automorphisms is inner, given by conjugation by an element of $\GL(2,k)$, so we give explicit matrix representatives for each conjugacy class. We compute and discuss the structure of the fixed point groups. Next, we extend these methods to the case that $n \ge 2$ and determine matrix representatives for both inner and outer involutions. We conclude with a description of the fixed point groups of each representative involution and a description of the related symmetric $k$-varieties in each case.

There are several notable differences between the characteristic 2 case and results found by Wu. There are different types of involutions in the characteristic 2 case than otherwise, in both the inner and the outer case. Also, in characteristic 2, the matrix representatives of involutions are unipotent and are never semisimple, whereas the involutions in \cite{wu02} are always semisimple.

This project was inspired by the work of Wu in \cite{wu02} and \cite{helwu02} where he determined involutions of $\SL(n,k)$ for fields of characteristic not 2, as well as for the group $\SO(2n+1,k)$. This last was expanded on and improved by Benim and others in \cite{ben13}. More recently, Benim, Helminck, and Ward published similar results for the groups $\Sp(2n,k)$ in \cite{ben15}.

%======================================================================
\section{Notation and Definitions}
%======================================================================
Unless otherwise specified, $k$ always represents a field of characteristic 2, $\ol{k}$ is the algebraic closure of $k$, and  $K$ is some extension field of $k$; thus $k \subset K \subseteq \ol{k}$. We make the following conventions throughout: $G = \SL(n, \ol{k})$, $G_k = \SL(n,k)$, and $G_K = \SL(n, K)$. 

For any $g \in G$, the map $\Inn_g(x) = gxg^{-1}$ is called an \emph{inner automorphism} of $G$; additionally, some elements of $\GL(n,k)$ or $\GL(n,K)$ induce inner automorphisms of $G$. The set of inner automorphisms is denoted by $\Inn(G_k)$. An automorphism is called {\em outer} if it is not inner. Denote by $\Aut(G_k)$ the inner automorphisms of $G_k$ induced by elements of $\GL(n,k)$. Then $\Aut(G, G_k)$ denotes the group of automorphisms in $\Aut(G)$ that fix $G_k$; these are called \emph{$k$-automorphisms}. Similar notation applies to inner automorphisms. In general, an {\em involution} in $\Aut(G)$ is an element of order 2. A $k$-involution of $G_k$ is an involution in $\Aut(G, G_k)$. At times we may simply use the term involution synonymously with $k$-involution.

Our goal is to determine all $k$-involutions in $G_k$. In general, there may exist both inner and outer automorphisms. For the inner case, the process is as follows. We first determine which automorphisms of $G_K$  fix $G_k$ point-wise. Separately, we determine which automorphisms of $G_K$ fix $G_k$ as a set. Finally, we combine this information to find which automorphisms have order two and fix $G_k$ point-wise. For the outer case, every outer automorphism is the product of an inner automorphism and a fixed outer automorphism, which for convenience we take to be the duality automorphism, except when $n=2$ and there are no outer automorphisms.

\begin{definition}
Let $\theta$ and $\varphi \in \Aut(G,G_k)$. If $\sigma\theta\sigma^{-1} = \varphi$ and
\begin{itemize}
      \item $\sigma \in \Aut(G,G_k)$, then $\theta$ and $\varphi$ are $\Aut(G,G_k)$-isomorphic.
      \item $\sigma \in \Inn(G, G_k)$, then $\theta$ and $\varphi$ are $\Inn(G,G_k)$-isomorphic.
\end{itemize}
\end{definition}
We want to determine which $k$-involutions are $\Inn(G,G_k)$-iso\-mor\-phic, and we will exhibit a list of representative elements for each class. Before we proceed, we note the following fact about fields of characteristic 2.

\begin{lemma}[J-P. Serre, \cite{ser73}]\label{serrelemma}%+
Let $k = \bbf_{2^r}$. Let $k^*$ denote the set of nonzero elements of $k$. Every element of $k^*$ is a square. Hence $|k^*/(k^*)^2| = 1$.
\end{lemma}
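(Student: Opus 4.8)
The plan is to exploit the fact that over a finite field of characteristic $2$ the squaring map is a bijection. First I would note that the Frobenius $F \colon x \mapsto x^2$ is a ring endomorphism of $k$: it is multiplicative trivially, and it is additive because $(a+b)^2 = a^2 + 2ab + b^2 = a^2 + b^2$, the cross term vanishing since $2 = 0$ in $k$. In particular $F$ restricts to a group endomorphism of the finite abelian group $k^*$, and the lemma is exactly the assertion that this endomorphism is surjective.

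Next I would identify its kernel. If $x \in k^*$ satisfies $x^2 = 1$, then $(x+1)^2 = x^2 + 1 = 0$ in $k$, which forces $x = 1$; hence $F$ is injective on $k^*$. Since $k^*$ is finite, an injective endomorphism of it is automatically surjective, so $F(k^*) = k^*$. This says precisely that every element of $k^*$ is a square, that is, $(k^*)^2 = k^*$, and therefore $|k^*/(k^*)^2| = 1$.

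An equivalent and perhaps slicker route is to use that $k^* = \bbf_{2^r}^*$ is cyclic of order $2^r - 1$, which is odd. Because $\gcd(2, 2^r - 1) = 1$, the map $x \mapsto x^2$ on a cyclic group of order $2^r - 1$ is an automorphism; concretely, letting $t$ be the inverse of $2$ modulo $2^r - 1$, the identity $x = (x^2)^t$ exhibits each $x$ as a square. There is no genuine obstacle in either argument; the only point worth emphasizing is that it is the finiteness of $k$ — equivalently the oddness of $|k^*|$ — that does the work, and the analogous statement fails over an infinite field of characteristic $2$ such as $\bbf_2(t)$, where $t$ is not a square.
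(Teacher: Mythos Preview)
Your proof is correct; both the Frobenius-injectivity argument and the cyclic-group argument are valid and standard. The paper itself does not prove this lemma at all---it merely attributes the result to Serre \cite{ser73} and states it without proof---so there is no approach to compare against.
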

%======================================================================
\section{$k$-involutions of $\SL(2,k)$}
%======================================================================
In this section, we assume that $n = 2$, and we note the following consequence due to Borel (see Proposition 14.9 of \cite{bor91}).

\begin{lemma}\label{borel}%+
If $G = \SL(n,\ol{k})$, then $\Aut(G) = \Inn(G)$.
\end{lemma}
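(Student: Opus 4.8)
The plan is to obtain the lemma as a special case of the general description of $\Aut(G)$ for a connected semisimple group --- which is the content of the cited Proposition 14.9 of \cite{bor91} --- and then to observe that in our situation this description collapses. Recall the shape of that result: having fixed a maximal torus $T$ of $G$, a Borel subgroup $B\supseteq T$, and a pinning (an isomorphism $\mathbb{G}_a\cong U_\alpha$ for each simple root $\alpha$), every automorphism of $G$ factors as $\Inn_g\circ\psi$ with $g\in G$ and $\psi$ preserving the pinned data, and the group of all such $\psi$ is isomorphic to the automorphism group of the based root datum of $(G,T,B)$; for a simple group this is the automorphism group of the Dynkin diagram.

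First I would reduce an arbitrary $\varphi\in\Aut(G)$ to a pinned automorphism. Since $\varphi(T)$ is again a maximal torus and all maximal tori are conjugate, composing $\varphi$ with a suitable element of $\Inn(G)$ arranges $\varphi(T)=T$; then $\varphi(B)$ is a Borel subgroup containing $T$, and since the Weyl group $N_G(T)/T$ permutes such Borel subgroups transitively, a further composition with some $\Inn_n$, $n\in N_G(T)$, arranges $\varphi(B)=B$; a last adjustment by an element of $T$ makes $\varphi$ fix the pinning. Thus, modulo $\Inn(G)$, we may take $\varphi$ to be a pinned automorphism $\psi$, and it remains only to show that $\psi\in\Inn(G)$.

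The second step is to observe that no nontrivial pinned automorphisms occur here, and this is exactly where the standing hypothesis $n=2$ enters: the Dynkin diagram of $\SL(2,\ol k)$ is of type $A_1$, a single vertex, whose automorphism group is trivial, so the based root datum of $\SL(2)$ admits no nontrivial automorphism. Consequently $\psi$ fixes $T$ pointwise; it therefore stabilizes each of the two root subgroups $U_{\pm\alpha}$ (their $T$-weights being distinct) and, by equivariance for the conjugation action of $T$, scales each of them by a unit of $\ol k$; the relations among the root subgroups in $\SL(2,\ol k)$ force the two scalars to be reciprocal, so $\psi$ coincides with conjugation by $\diag(s,s^{-1})$ for any $s\in\ol k^*$ with $s^2$ equal to the relevant scalar. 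Such an $s$ exists because $\ol k$ is algebraically closed, so $\psi\in\Inn(G)$, and therefore $\Aut(G)=\Inn(G)$.

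I do not expect a genuine obstacle here: once the structure theorem is invoked the argument is bookkeeping, the only mildly delicate points being the standard conjugacy statements for maximal tori and Borel subgroups and the verification that an automorphism of $\SL(2,\ol k)$ fixing the diagonal torus pointwise is a conjugation by a diagonal matrix --- where, since we are in characteristic $2$, it is $T$-equivariance that rules out the exotic additive automorphisms $u\mapsto\sum_i a_i u^{2^i}$ of a root subgroup. Since the lemma is explicitly attributed to Borel, in the final write-up I would keep the proof to a sentence or two --- cite Proposition 14.9 of \cite{bor91} and note that the Dynkin diagram of type $A_1$ has no nontrivial symmetry --- rather than reproduce the general classification.
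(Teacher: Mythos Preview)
Your proposal is correct and matches the paper's approach: the paper gives no proof at all beyond the attribution to Proposition~14.9 of \cite{bor91}, and your final paragraph already anticipates this by recommending the write-up be reduced to the citation plus the remark that the $A_1$ Dynkin diagram has trivial symmetry group. One minor redundancy: once you have arranged that $\psi$ fixes the pinning and acts trivially on the based root datum, $\psi$ is already the identity (it fixes $T$ pointwise, fixes $U_\alpha$ pointwise by the pinning, and hence fixes $U_{-\alpha}$ pointwise by the relations), so the further discussion of reciprocal scalars and the existence of $s$ with $s^2$ equal to a given scalar is unnecessary --- but it is not wrong.
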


It follows that every $\varphi \in \Aut(G, G_k)$ is inner. Specifically, there is some $A \in \GL(2,K)$ such that $\varphi = \Inn_A|_{G_k}$. Notice that entries of $A$ may be in some intermediate extension field $K$ of $k$.

\begin{lemma}\label{fixG}%+
Suppose $A \in \GL(2,K)$. Then $\Inn_A|_{G_k} = \Id$ if and only if $A = p\Id$ for some $0\ne p \in K$.
\end{lemma}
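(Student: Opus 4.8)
The plan is to establish the nontrivial implication by testing the relation $A x = x A$ against two well-chosen elements of $\SL(2,k)$. The reverse implication is immediate: if $A = p\Id$ with $0 \ne p \in K$, then $\Inn_A$ acts as the identity on all of $\GL(2,K)$, and in particular $\Inn_A|_{G_k} = \Id$.

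For the forward direction, suppose $\Inn_A|_{G_k} = \Id$, which is equivalent to saying that $A$ commutes with every $x \in \SL(2,k)$. Write $A = \begin{bsmallmatrix} a & b \\ c & d \end{bsmallmatrix}$ with entries in $K$. First I would use the upper unipotent $u = \begin{bsmallmatrix} 1 & 1 \\ 0 & 1 \end{bsmallmatrix} \in \SL(2,k)$: comparing the products $Au$ and $uA$ entrywise forces $c = 0$ and $a = d$. Then I would use the lower unipotent $\ell = \begin{bsmallmatrix} 1 & 0 \\ 1 & 1 \end{bsmallmatrix} \in \SL(2,k)$: comparing $A\ell$ and $\ell A$ forces $b = 0$ (and re-derives $a = d$). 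Hence $A = a\Id$, and since $A$ is invertible, $a \in K$ is nonzero; taking $p = a$ finishes the proof.

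The main point requiring care is not the computation — it is the observation that the two test matrices $u$ and $\ell$ genuinely lie in $\SL(2,k)$ regardless of how small $k$ is, since their entries are $0$ and $1$ (members of every field) and their determinant is $1$, and that just these two elements already pin down $A$ completely. I would emphasize this because the unipotents $u$ and $\ell$ recur as the basic building blocks throughout the $n = 2$ analysis. Beyond that there is no real obstacle: once the test elements are fixed, the argument is a short entrywise comparison, and it does not even use that $k$ has characteristic $2$.
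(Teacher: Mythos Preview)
Your proof is correct and matches the paper's approach almost exactly: the paper also tests $AX = XA$ against the two unipotents $\begin{bsmallmatrix}1&0\\1&1\end{bsmallmatrix}$ and $\begin{bsmallmatrix}1&1\\0&1\end{bsmallmatrix}$ (just in the opposite order) to force $A$ to be scalar, and declares the converse clear. There is no substantive difference between the two arguments.
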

\begin{proof}
Let $A = (a_{ij})$ with $i, j \in \{1, 2\}$ and $a_{ij} \in K$. Since $\Inn_A|_{G_k} = \Id$, then for all $X = (x_{ij}) \in G_k$, $\Inn_A(X) = AXA^{-1} = X$, so $A \in Z_{\GL(2,K)}(G_k)$. Direct computation shows that choosing $X_1 = \begin{bsmallmatrix} 1 & 0\\ 1 & 1 \end{bsmallmatrix}$ gives $a_{11} = a_{22}$ and $a_{12} = 0$ and choosing $X_2 = \begin{bsmallmatrix} 1 & 1\\ 0 & 1 \end{bsmallmatrix}$ gives $a_{21} = 0$.

The converse is clear.
\end{proof}

\begin{lemma}\label{keepInv}%+
Let $A \in \GL(2,K)$. Then $\Inn_A|_{G_k} \in \Aut(G_k)$ if and only if $A = pB$, for some $p \in K$ and $B \in \GL(2,k)$.
\end{lemma}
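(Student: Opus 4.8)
The plan is as follows. The ``if'' direction is immediate: if $A = pB$ with $0 \ne p \in K$ and $B \in \GL(2,k)$, then since $p\Id$ is central in $\GL(2,K)$ we have $\Inn_A = \Inn_{pB} = \Inn_B$, and $\Inn_B|_{G_k}$ lies in $\Aut(G_k)$ because conjugation by $B \in \GL(2,k)$ carries $\SL(2,k)$ onto itself. So the content is the ``only if'' direction.

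For that direction, assume $\Inn_A|_{G_k} \in \Aut(G_k)$, i.e.\ $AXA^{-1}$ has all entries in $k$ for every $X \in G_k = \SL(2,k)$, and write $A = (a_{ij})$. The key observation is that $\SL(2,k)$ spans the matrix algebra $M(2,k)$ as a $k$-vector space: already $\Id$, the two transvections $\begin{bsmallmatrix} 1 & 0\\ 1 & 1 \end{bsmallmatrix}$ and $\begin{bsmallmatrix} 1 & 1\\ 0 & 1 \end{bsmallmatrix}$ from the proof of \cref{fixG}, and $\begin{bsmallmatrix} 0 & 1\\ 1 & 1 \end{bsmallmatrix}$ (whose determinant $-1$ equals $1$ in characteristic $2$) span it, and this remains true when $k = \bbf_2$. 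Hence the hypothesis upgrades to $A\,M(2,k)\,A^{-1} \subseteq M(2,k)$. Applying this to the matrix units $E_{ij}$ (a $1$ in position $(i,j)$, zeros elsewhere) and using $\det A = a_{11}a_{22} + a_{12}a_{21}$, a direct computation shows that each entry of $AE_{ij}A^{-1}$ is of the form $a_{rs}a_{tu}/\det A$, so every product $a_{rs}a_{tu}/\det A$ lies in $k$.

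To finish, choose any nonzero entry $a_{pq}$ of $A$. For all $i, j$ we get $a_{ij}/a_{pq} = (a_{ij}a_{pq}/\det A)/(a_{pq}^2/\det A) \in k$, since the numerator is in $k$ and the denominator is in $k \setminus \{0\}$. Therefore $B := a_{pq}^{-1}A$ has all entries in $k$, and it is invertible, being a nonzero scalar multiple of $A$; thus $B \in \GL(2,k)$ and $A = a_{pq}B$ with $0 \ne a_{pq} \in K$, as desired.

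I do not expect a serious obstacle; the only points to watch are verifying the spanning claim over very small fields such as $\bbf_2$, and handling the characteristic-$2$ sign collapses when writing down $A^{-1}$ and the entries of $AE_{ij}A^{-1}$. The middle step can alternatively be phrased conceptually: conjugation by $A$ restricts to a $k$-algebra automorphism of the central simple $k$-algebra $M(2,k)$, hence is inner over $k$ by Skolem--Noether, so $A$ differs from some $B \in \GL(2,k)$ by a central, i.e.\ scalar, factor; but the elementary argument above avoids that machinery, and it is also the one that will generalize to the $n \ge 2$ case later.
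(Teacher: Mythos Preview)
Your proof is correct and follows essentially the same route as the paper: both arguments extract from the hypothesis that every product $a_{ij}a_{k\ell}/\det A$ lies in $k$, then divide by $a_{pq}^2/\det A$ for a nonzero entry $a_{pq}$ to conclude $a_{ij}/a_{pq}\in k$. The only difference is packaging---the paper conjugates six explicit elements of $\SL(2,k)$ and adds the results to isolate these products, whereas you first observe that $\SL(2,k)$ spans $M(2,k)$ over $k$ and then conjugate the matrix units directly, which is a tidier way to reach the same identity (and your Skolem--Noether aside gives a pleasant conceptual explanation for why the result should hold).
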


\begin{proof}
If $\Inn_A$ is invariant on $G_k$, then $\Inn_A(X) \in G_k$ for all $X \in G_k$. In particular, for the matrices $X_{ij}$
\[
X_{11} = \begin{bmatrix}0&1\\1&1\end{bmatrix} \hspace{.75cm} X_{12} = \begin{bmatrix}1&0\\1&1\end{bmatrix} \hspace{.75cm} X_{21} = \begin{bmatrix}1&1\\0&1\end{bmatrix} \hspace{.75cm} X_{22} = \begin{bmatrix}1&1\\1&0\end{bmatrix},
\]
and $Y_i$
\[
Y_1 = \begin{bmatrix}0&1\\1&0\end{bmatrix} \hspace{.75cm} Y_2 = \begin{bmatrix}1&0\\0&1\end{bmatrix},
\]
the entries of $\Inn_A(X_{ij})$ and $\Inn_A(Y_i)$ are elements of $k$. The sum $\Inn_A(X_{ij}) + \Inn_A(Y_i)$ reveals that 
\[
\frac{a_{ij}a_{k\ell}}{a_{11}a_{22} + a_{12}a_{21}} \in k
\]
for all $i, j, k$ and $\ell$. Fix $i$ and $j$ so that $a_{ij} \neq 0$, and set $r, s \in k$ as follows:
\[
r =  \frac{a_{ij}a_{k\ell}}{a_{11}a_{22} + a_{12}a_{21}} \hspace{.75cm}
s = \left(\frac{a_{11}a_{22} + a_{12}a_{21}}{a_{ij}a_{ij}}\right).
\] 
Thus $rs = \frac{a_{k\ell}}{a_{ij}} \in k$ for all $k$ and $\ell$, and hence
\[
A = a_{ij}
\begin{bmatrix}
\frac{a_{11}}{a_{ij}} & \frac{a_{12}}{a_{ij}}\\[5pt]
\frac{a_{21}}{a_{ij}} & \frac{a_{22}}{a_{ij}}
\end{bmatrix},
\]
where $a_{ij} \in K$. That is, $A = pB$ with $B \in \GL(2,k)$ and $p \in K$. (In fact $B \in G_k$ if $a_{ij}^{2} = 1$.)

The converse follows from the fact that $\Inn_{pB}(X) = pBX(pB)^{-1} = BXB^{-1}$, and $\det(BXB^{-1}) = \det(B)\det(X)\det(B^{-1}) = \det(X) = 1$.
\end{proof}

\begin{corollary}\label{ch2cor1}%+
Every automorphism of $G_k$ is of the form $\Inn_A$ for $A \in \GL(2,k)$.
\end{corollary}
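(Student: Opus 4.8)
The plan is to combine the two previous lemmas. By Lemma~\ref{borel} every automorphism of $G$ is inner, and since $k$-automorphisms of $G_k$ extend to automorphisms of $G$, any $\varphi \in \Aut(G_k)$ has the form $\Inn_A|_{G_k}$ for some $A \in \GL(2,K)$ with $K$ a priori an extension of $k$. By Lemma~\ref{keepInv}, the requirement that $\Inn_A$ preserve $G_k$ forces $A = pB$ with $p \in K^*$ and $B \in \GL(2,k)$. The final observation is that the scalar $p$ contributes nothing: since $\Inn_{pB} = \Inn_B$ (the scalar $pI$ is central in $\GL(2,K)$), we may simply discard $p$ and write $\varphi = \Inn_B$ with $B \in \GL(2,k)$.

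Concretely, I would write: let $\varphi \in \Aut(G_k)$. By Lemma~\ref{borel}, $\varphi$ is induced by conjugation by some $A \in \GL(2,K)$. Since $\varphi(G_k) = G_k$, Lemma~\ref{keepInv} gives $A = pB$ for $p \in K^*$, $B \in \GL(2,k)$. Then for every $X \in G_k$,
\[
\varphi(X) = \Inn_A(X) = (pB)X(pB)^{-1} = BXB^{-1} = \Inn_B(X),
\]
so $\varphi = \Inn_B$ with $B \in \GL(2,k)$, as claimed.

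There is essentially no obstacle here; the corollary is a bookkeeping consequence of the preceding two lemmas, the only point worth stating explicitly being that the ambiguity of the representing matrix up to a scalar lets us clear denominators and land in $\GL(2,k)$ rather than merely $\GL(2,K)$. One might add the parenthetical remark (already noted in the proof of Lemma~\ref{keepInv}) that one can even take $B \in \SL(2,k) = G_k$ when the relevant scalar is chosen to have square $1$, but this is not needed for the statement. The proof is therefore a two- or three-line deduction.
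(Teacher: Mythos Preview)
Your proof is correct and follows exactly the intended route: the paper states this corollary without proof because it is immediate from Lemma~\ref{borel} and Lemma~\ref{keepInv}, with the final step being precisely your observation that $\Inn_{pB}=\Inn_B$ so the scalar in $K$ can be discarded.
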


Let $\varphi \in \Aut(G,G_k)$. Then $\varphi$ is inner, $\varphi^2(X) = (\Inn_A)^2(X) = A^2 X (A^2)^{-1}$, so by \cref{borel} and \cref{ch2cor1}
\[
\varphi^2 = (\Inn_A)^2 = \Inn_{A^2}
\]
for some $A \in \GL(2,k)$. Since $A \in \GL(2,k)$, by \cref{fixG}, $A^2 = p\Id$. Since $A \in \GL(2,k)$, then $p \in k$. In other words, if $\varphi$ is an involution, then $A^2 = p\Id$ with $0 \ne p \in k$. 

\begin{lemma}
Suppose $\varphi \in \Aut(G,G_k)$ is an involution. There exists some $0 \ne p \in k$ and a matrix $A \in \GL(2,k)$ such that $\varphi = \Inn_A|_{G_k}$, and $A$ is conjugate to $\begin{bsmallmatrix} 0 & 1\\p & 0\end{bsmallmatrix}$. Conversely, if $B \in \GL(2,K)$ is conjugate to $\begin{bsmallmatrix} 0 & 1\\p & 0\end{bsmallmatrix}$, then $\Inn_B \in \Aut(G,G_k)$.
\end{lemma}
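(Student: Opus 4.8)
The paragraph immediately preceding the statement already shows that if $\varphi\in\Aut(G,G_k)$ is an involution then $\varphi=\Inn_A|_{G_k}$ for some $A\in\GL(2,k)$ with $A^2=p\,\Id$ and $0\ne p\in k$; so the forward direction reduces to identifying the $\GL(2,k)$-conjugacy class of such an $A$. First I would note that $A$ is not scalar: otherwise $A=q\,\Id$ and \cref{fixG} gives $\Inn_A|_{G_k}=\Id$, contradicting that $\varphi$ is an involution. Because $A$ is non-scalar, there is a vector $v\in k^2$ for which $\{Av,v\}$ is a basis of $k^2$ (if no such $v$ existed, every nonzero vector of $k^2$ would be an eigenvector of $A$, forcing $A$ to be scalar). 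Reading off the matrix of $A$ in the ordered basis $(Av,v)$ and using $A(Av)=A^2v=pv$, one gets precisely $\Lx{p}$; hence $A=P\,\Lx{p}\,P^{-1}$ with $P=[\,Av\mid v\,]\in\GL(2,k)$. Taking determinants in $A^2=p\,\Id$ also shows $\det A=p$, matching $\det\Lx{p}=p$ in characteristic $2$. Equivalently: $X^2-p$ is the minimal polynomial of $A$ (it annihilates $A$, has degree $\le 2$, and is not linear since $A$ is non-scalar), so $A$ and $\Lx{p}$ are both cyclic with the same invariant factor and hence conjugate over $k$ by rational canonical form.

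For the converse I would read ``$B$ is conjugate to $\Lx{p}$'' as conjugate by an element of $\GL(2,k)$; this is the notion the forward direction delivers, and it is the one needed here, since conjugacy over a larger field is too weak — for example $\begin{bsmallmatrix}0 & t\\ pt^{-1} & 0\end{bsmallmatrix}$ with $t\in K\setminus k$ is conjugate to $\Lx{p}$ in $\GL(2,K)$ but does not preserve $G_k$, as one checks against $\begin{bsmallmatrix}1&1\\0&1\end{bsmallmatrix}$. So suppose $B=C\,\Lx{p}\,C^{-1}$ with $C\in\GL(2,k)$ and $0\ne p\in k$. Then $B\in\GL(2,k)$, and for every $X\in G_k$ the entries of $\Inn_B(X)=BXB^{-1}$ lie in $k$ while $\det(BXB^{-1})=\det X=1$, so $\Inn_B(X)\in G_k$; thus $\Inn_B(G_k)=G_k$. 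Since $B\in\GL(2,\ol k)$ the map $\Inn_B$ is an automorphism of $G$, and therefore $\Inn_B\in\Aut(G,G_k)$. This is exactly the converse half of \cref{keepInv}; note also that $B^2=p\,\Id$, so $\Inn_B$ is in fact an involution.

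The computation in the converse and the determinant bookkeeping are routine, so the only step with real content is the normal-form statement for $A$ in the forward direction, which — once $A$ is known to be non-scalar — is standard $2\times2$ linear algebra. The one thing to watch is the field of definition: both the change-of-basis argument and the rational-canonical-form argument realize the conjugacy over $k$ itself, and this is precisely what makes the two directions of the lemma fit together.
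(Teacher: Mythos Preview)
Your forward direction is essentially the paper's: both argue that $A^2=p\,\Id$ with $A$ non-scalar forces the minimal polynomial of $A$ to be $x^2+p$, and then rational canonical form delivers $\Lx{p}$. The paper gets there by writing $A=\begin{bsmallmatrix}a&b\\c&d\end{bsmallmatrix}$, squaring explicitly to conclude $a=d$, and then comparing minimal and characteristic polynomials; your cyclic-vector argument is a little more direct but lands in the same place.

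For the converse you are right to insist on conjugacy over $k$ rather than $K$, and your reading is the only one under which the statement is true. The paper's own converse argument takes $M\in\GL(2,K)$ with $B=M^{-1}AM$, asserts ``since $\Inn_M(X)\in G_k$'' without justification (your counterexample $\begin{bsmallmatrix}0&t\\pt^{-1}&0\end{bsmallmatrix}$ shows this can fail), and then computes only that $\Inn_{B^2}=\Id$ --- which is not the conclusion claimed, since membership in $\Aut(G,G_k)$ asks that $\Inn_B$ preserve $G_k$, not merely that its square be trivial. Your version --- conjugacy over $k$ forces $B\in\GL(2,k)$, whence invariance of $G_k$ under $\Inn_B$ is immediate (this is just the easy direction of \cref{keepInv}), and $B^2=p\,\Id$ gives the involution for free --- is the correct repair.
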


\begin{proof}
Since $\varphi$ is an involution, there exists some $A \in \GL(2,k)$ such that $(\Inn_A)^2 = \Id$. Let 
\[
A = 
\begin{bmatrix}
a & b\\
c & d
\end{bmatrix},
\]
with $a, b, c, d \in k$.
Then 
\[
A^2 = 
\begin{bmatrix}
a^2 + bc & (a + d)b\\
(a + d)c & d^2 + bc
\end{bmatrix}.
\]
By remarks following \cref{ch2cor1}, $A^2 = p\Id$ for some $p \in k$, so 
\begin{align*}
a^2 + bc &= d^2 + bc\\
(a+d)c &= (a + d)b = 0.
\end{align*}
Since $a^2 = d^2$ implies $(a + d)^2 = 0$, it follows that $a = d$, and also $bc \ne a^2$ since $a^2 + bc \ne 0$. Then
$$A = \begin{bmatrix} a & b\\c & a\end{bmatrix}.$$
The characteristic polynomial is $C_A(x) = x^2 + a^2 + bc$. If the minimal polynomial is $M_A(x) = x + q$, for some $q \in k$, then $A = q\Id$ and $\Inn_A$ has order 1 and is not an involution. So $M_A(x) = C_A(x)$, and $A$ is conjugate to the (normalized) matrix
\[
B =
\begin{bmatrix}
0 & 1\\
p & 0
\end{bmatrix},
\]
where $p = (bc + a^2)^{-1} \in k$.

To show the converse, let $A = \begin{bsmallmatrix} 0 & 1\\p & 0\end{bsmallmatrix}$ with $p \in k$, and suppose $M \in GL(2,K)$ such that $B = M^{-1}AM$. Then $B^2 = M^{-1}A^2M$, and since $\Inn_M(X) \in G_k$ and $\Inn_A$ is a $k$-involution, we have
\begin{align*}
\Inn_{B^2}(X) &= (M^{-1}A^2M) X (M^{-1}A^2M)^{-1}\\
&= M^{-1}\big(A^2(MXM^{-1})A^{-2}\big)M \\
&= M^{-1}\left[\Inn_A^2 (MXM^{-1})\right] M\\
&= M^{-1}(MXM^{-1})M\\
&= X
\end{align*}
\end{proof}

\begin{example}
Since the characteristic and minimal polynomials of $A = \begin{bsmallmatrix}0 & 1\\p & 0\end{bsmallmatrix}$ are different, $A$ is never semisimple. However, $A$ can be unipotent. In the case that $k = \bbf_2$, there are three involutions corresponding to the matrices $\begin{bsmallmatrix}1 & 1\\ 0 & 1\end{bsmallmatrix}, \begin{bsmallmatrix}1 & 0\\ 1 & 1\end{bsmallmatrix}$, and $\begin{bsmallmatrix}0&1\\1&0\end{bsmallmatrix}$ (which are all conjugate). 
\end{example}

\begin{corollary}
Each isomorphism class of $k$-involutions of $G_k$ has representative $\Inn_A$, where $A \in \GL(2,k)$ is of the form
$$A = \Lx{p}.$$
\end{corollary}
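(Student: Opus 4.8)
The plan is to read off the statement from the preceding lemma together with the elementary functoriality of the map $g \mapsto \Inn_g$. The previous lemma already tells us that any $k$-involution $\varphi$ of $G_k$ has the form $\varphi = \Inn_A|_{G_k}$ with $A \in \GL(2,k)$ conjugate to $\Lx{p}$ for some $0 \ne p \in k$; moreover, inspecting its proof, the conjugation is witnessed by a matrix $M$ with entries in $k$, since $M$ is built from the rational canonical form of $A$ over the field of definition $k$ (this can also be seen from \cref{keepInv}). So the first step is simply to fix such an $M \in \GL(2,k)$ with $M^{-1}AM = \Lx{p}$.

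Next I would record the identity, valid for every $X \in G_k$,
\[
\Inn_{M^{-1}AM}(X) = M^{-1}A\big(MXM^{-1}\big)A^{-1}M = \big(\Inn_{M^{-1}} \circ \Inn_A \circ \Inn_M\big)(X),
\]
so that $\Inn_{\Lx{p}} = \Inn_M^{-1}\circ \varphi \circ \Inn_M$ as automorphisms of $G$. Since $M \in \GL(2,k)$, the automorphism $\Inn_M$ is inner and stabilizes $G_k = \SL(2,k)$ (determinants and entries are preserved), hence $\Inn_M \in \Inn(G,G_k)$; by \cref{borel} and \cref{ch2cor1} there is in any case nothing outer to worry about here. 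Therefore $\varphi$ and $\Inn_{\Lx{p}}$ are $\Inn(G,G_k)$-isomorphic, and $\Inn_{\Lx{p}}$ is itself a $k$-involution by the converse half of the preceding lemma. It follows that every isomorphism class of $k$-involutions of $G_k$ contains a representative of the asserted shape, which is the claim.

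I do not anticipate a genuine obstacle: the corollary is essentially a repackaging of the lemma. The one point that deserves a sentence of care is that the conjugating matrix $M$ may be chosen in $\GL(2,k)$ rather than merely in some $\GL(2,K)$ — this is exactly what makes $\Inn_M$ a $k$-automorphism and hence an admissible isomorphism between the two involutions. If one wanted more, one could go on to ask which parameters $p$ yield genuinely distinct classes (e.g. up to squares in $k^*$, trivially when $k$ is finite by \cref{serrelemma}), but that refinement is not required for the statement as phrased.
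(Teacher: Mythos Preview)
Your argument is correct and matches the paper's intent: the corollary is stated there without proof as an immediate consequence of the preceding lemma, and you have simply unpacked that immediacy by observing that the conjugating matrix $M$ lives in $\GL(2,k)$ (rational canonical form over $k$), so $\Inn_M \in \Inn(G,G_k)$ and $\varphi = \Inn_M \circ \Inn_{\Lx{p}} \circ \Inn_M^{-1}$. Your care in noting that $M$ may be chosen over $k$, not merely over some extension $K$, is exactly the point that justifies calling this an isomorphism of $k$-involutions.
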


\begin{lemma}\label{cacinv}%+
Let $\Inn_A$ and $\Inn_B$ be automorphisms in $\Aut(G,G_k)$. Then $\Inn_A$ and $\Inn_B$ are isomorphic if and only if there is a matrix $C \in \GL(2,k)$ and a constant $p \in k$ such that $CAC^{-1} = pB$.
\end{lemma}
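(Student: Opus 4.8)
The plan is to reduce the statement to the two structural facts already in hand: Corollary~\ref{ch2cor1}, which says every automorphism in $\Aut(G,G_k)$ has the form $\Inn_C$ with $C \in \GL(2,k)$, and Lemma~\ref{fixG}, which says an inner automorphism is trivial on $G_k$ exactly when the matrix is scalar. Note first that since every automorphism of $G$ is inner (Lemma~\ref{borel}), the notions of $\Aut(G,G_k)$-isomorphic and $\Inn(G,G_k)$-isomorphic coincide in this setting, so ``isomorphic'' unambiguously means conjugate by some $\sigma \in \Aut(G,G_k)$.

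For the forward direction, suppose $\sigma \Inn_A \sigma^{-1} = \Inn_B$ with $\sigma \in \Aut(G,G_k)$. By Corollary~\ref{ch2cor1} write $\sigma = \Inn_C$ for some $C \in \GL(2,k)$. A direct computation gives $\Inn_C \Inn_A \Inn_C^{-1} = \Inn_{CAC^{-1}}$, so $\Inn_{CAC^{-1}} = \Inn_B$ on $G_k$, equivalently $\Inn_{CAC^{-1}B^{-1}} = \Id$ on $G_k$. Since $A, B, C \in \GL(2,k)$, the matrix $CAC^{-1}B^{-1}$ lies in $\GL(2,k)$, so Lemma~\ref{fixG} (applied with $K = k$) forces $CAC^{-1}B^{-1} = p\,\Id$ for some $0 \ne p \in k$, that is, $CAC^{-1} = pB$.

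For the converse, suppose $CAC^{-1} = pB$ with $C \in \GL(2,k)$ and $0 \ne p \in k$. Scalar matrices act trivially under conjugation, so $\Inn_{CAC^{-1}} = \Inn_{pB} = \Inn_B$, while as above $\Inn_{CAC^{-1}} = \Inn_C \Inn_A \Inn_C^{-1}$. Since $C \in \GL(2,k)$, we have $\Inn_C \in \Aut(G,G_k)$, so $\Inn_C$ conjugates $\Inn_A$ to $\Inn_B$ within $\Aut(G,G_k)$, and the two automorphisms are isomorphic.

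I do not anticipate a real obstacle. The only point that needs a moment's care is ensuring the conjugating automorphism $\sigma$ really can be realized by a matrix over $k$ rather than over some proper extension $K$ — but this is precisely Corollary~\ref{ch2cor1} (or, more generally, Lemma~\ref{keepInv} combined with $\Inn_{pB} = \Inn_B$). With that secured, both implications are one-line manipulations built on Lemma~\ref{fixG}.
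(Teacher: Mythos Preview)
Your proof is correct and follows essentially the same approach as the paper: both realize the conjugating automorphism as $\Inn_C$ with $C \in \GL(2,k)$ via Corollary~\ref{ch2cor1}, then apply Lemma~\ref{fixG} to the composite $\Inn_{CAC^{-1}B^{-1}}$ (the paper writes it as $\Inn_{B^{-1}CAC^{-1}}$, which is equivalent) to extract the scalar. Your version is in fact slightly more complete, since you explicitly spell out the easy converse and note that $\Aut(G,G_k)$- and $\Inn(G,G_k)$-isomorphism coincide here; the only small point to make explicit is that you are tacitly taking $A,B \in \GL(2,k)$ (again by Corollary~\ref{ch2cor1}) when you assert $CAC^{-1}B^{-1} \in \GL(2,k)$ and hence $p \in k$.
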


\begin{proof}
By definition, $\Inn_A$ is isomorphic to $\Inn_B$ if and only if there is a matrix $C \in \GL(2,k)$ such that $\Inn_C\Inn_A\Inn_C^{-1} = \Inn_B$. Note that $\Inn_g\Inn_h = \Inn_{gh}$ in general, so for any $X \in G$, 
\[
CAC^{-1}XCA^{-1}C^{-1} = BXB^{-1}.
\]
This can be rewritten as
\[
(B^{-1}CAC^{-1}) X(CA^{-1}C^{-1}B) = X.
\]
In other words, $\Inn_{B^{-1} CAC^{-1}} = \Id$. By \cref{fixG}, $B^{-1}CAC^{-1} = p\Id$ for some $p \in k$. Thus 
\begin{equation}\label{CACinveqpB}%+
CAC^{-1} = pB
\end{equation}
\end{proof}

Denote by $k^*$ the multiplicative subgroup of $k$ and by $(k^*)^2$ the non-zero squares of $k$. We strengthen \cref{cacinv} as follows.

\begin{theorem}\label{thmAB}
If $\Inn_A$ and $\Inn_B \in \Aut(G,G_k)$ are involutions corresponding to $A = \begin{bsmallmatrix} 0 & 1\\ r & 0\end{bsmallmatrix}$ and $B = \begin{bsmallmatrix} 0 & 1\\ s & 0\end{bsmallmatrix}$, then $\Inn_A$ is isomorphic to $\Inn_B$ if and only if $r/s \in (k^*)^2$.
\end{theorem}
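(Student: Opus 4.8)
The plan is to reduce at once to a statement about matrices by invoking \cref{cacinv}: $\Inn_A$ and $\Inn_B$ are isomorphic exactly when there exist $C \in \GL(2,k)$ and $p \in k$ with $CAC^{-1} = pB$. Since $A$ is invertible, so is $CAC^{-1}$, which forces $p \in k^*$; thus the whole question becomes whether $A$ is $\GL(2,k)$-conjugate to a nonzero scalar multiple of $B$. For the forward direction I would just take determinants of $CAC^{-1}=pB$: in characteristic $2$, $\det A = -r = r$, $\det B = -s = s$, and $\det(pB) = p^2\det B = p^2 s$, while conjugation preserves the determinant, so $r = p^2 s$, i.e. $r/s = p^2 \in (k^*)^2$.

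For the converse, suppose $r/s = p^2$ with $p \in k^*$, and take $C = \diag(p,1) \in \GL(2,k)$. A direct computation gives
\[
CAC^{-1} = \begin{bmatrix} p & 0\\ 0 & 1\end{bmatrix}\begin{bmatrix} 0 & 1\\ r & 0\end{bmatrix}\begin{bmatrix} p^{-1} & 0\\ 0 & 1\end{bmatrix} = \begin{bmatrix} 0 & p\\ rp^{-1} & 0\end{bmatrix},
\]
and $rp^{-1} = p^2 s p^{-1} = ps$, so $CAC^{-1} = pB$; applying \cref{cacinv} in the other direction then gives that $\Inn_A$ and $\Inn_B$ are isomorphic. (One could also argue without exhibiting $C$: $A$ and $pB$ are non-scalar $2\times2$ matrices, hence each is $k$-conjugate to the companion matrix of its characteristic polynomial, and $r = p^2 s$ forces both characteristic polynomials to equal $x^2 + r$.)

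I do not anticipate a real obstacle once \cref{cacinv} is in hand; the only subtlety worth flagging is that passing from $X$ to $pX$ scales the determinant by $p^2$, not $p$, in dimension $2$, and it is precisely this that makes the square class of $r/s$ modulo $(k^*)^2$ the governing invariant. Together with \cref{serrelemma} this also yields that over a finite field of characteristic $2$ all of these involutions $\Inn_A$ fall into a single isomorphism class, in agreement with the $k = \bbf_2$ example above.
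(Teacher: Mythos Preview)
Your proof is correct. The paper's argument is close in spirit but packaged differently: after invoking \cref{cacinv}, the paper handles both directions at once by noting that $A$ and $pB$ are $\GL(2,k)$-conjugate if and only if their minimal and characteristic polynomials agree, and since here both equal $x^2+r$ and $x^2+p^2s$ respectively, conjugacy holds precisely when $r=p^2s$. You instead split the two directions: the forward implication via determinants (which is a bit more elementary than comparing invariant factors), and the converse via the explicit conjugator $C=\diag(p,1)$ (which the paper does not exhibit). Your parenthetical remark about companion matrices is exactly the paper's route. Either approach works; yours has the advantage of producing the conjugating matrix explicitly, while the paper's invariant-factor viewpoint is what generalizes cleanly to the $n>2$ analysis in \cref{charln2p} and \cref{charlmc}.
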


\begin{proof}
By \cref{cacinv}, $\Inn_A$ and $\Inn_B$ are isomorphic if and only if $A$ is conjugate to $pB$ where $p \in k$. Two matrices are conjugate if and only if both their characteristic and minimal polynomials are equal. In this case, $M_A(x) = C_A(x) = x^2 + r$ and $M_{pB}(x) = C_{pB}(x) = x^2 + p^2 s$. So $A$ and $B$ are conjugate if and only if $r = p^2 s$; in other words, exactly when $r/s \in (k^*)^2$.
\end{proof}

\begin{corollary}
The number of isomorphism classes of $k$-involutions of $G_k$ equals the number of square classes of $k^*$.
\end{corollary}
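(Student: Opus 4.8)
The plan is to exhibit an explicit bijection between the set of square classes $k^*/(k^*)^2$ and the set of $\Inn(G,G_k)$-isomorphism classes of $k$-involutions of $G_k$. By the preceding corollary, every isomorphism class of $k$-involutions contains a representative of the form $\Inn_A$ with $A = \Lx{p}$; moreover, tracing through the lemma that produced this normal form, the scalar $p = (bc + a^2)^{-1}$ is forced to be \emph{nonzero} (since $a^2 + bc \ne 0$ was exactly what prevented $A$ from being scalar), so in fact $p \in k^*$. This gives a surjection from $k^*$ onto the set of isomorphism classes, $p \mapsto [\Inn_{\Lx{p}}]$.

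Next I would check that this surjection factors through $k^*/(k^*)^2$ and is injective on the quotient. Both directions are precisely the content of \cref{thmAB}: $\Inn_{\Lx{r}}$ and $\Inn_{\Lx{s}}$ are isomorphic if and only if $r/s \in (k^*)^2$, i.e.\ if and only if $r$ and $s$ lie in the same square class. Hence $p \mapsto [\Inn_{\Lx{p}}]$ descends to a well-defined injection $k^*/(k^*)^2 \hookrightarrow \{\text{iso.\ classes of }k\text{-involutions}\}$, which is also surjective by the previous paragraph, and is therefore a bijection. Counting both sides yields the claim.

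There is no real obstacle here beyond bookkeeping; the one point that deserves an explicit sentence is the verification that the normalizing scalar $p$ genuinely ranges over $k^*$ (not merely $k$), so that square \emph{classes of $k^*$} — and not some larger or smaller index set — are what parametrize the involutions. (In particular, by \cref{serrelemma}, when $k = \bbf_{2^r}$ this count collapses to $1$, recovering the fact that $\SL(2,\bbf_{2^r})$ has a unique $k$-involution up to isomorphism, consistent with the $\bbf_2$ example above.) Everything else is a formal consequence of \cref{thmAB} and the representative corollary.
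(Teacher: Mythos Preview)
Your proof is correct and is exactly the argument the paper has in mind: the corollary is stated without proof precisely because it follows immediately from the representative corollary together with \cref{thmAB}, and your bijection $k^*/(k^*)^2 \to \{\text{iso.\ classes}\}$, $p(k^*)^2 \mapsto [\Inn_{\Lx{p}}]$, is the intended one-line justification made explicit.
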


The isomorphism classes of $k$-involutions of $G_k$ depend on the elements in $(k^*)^2$. By \cref{serrelemma}, every non-zero element of the finite field $\bbf_q$ is a square.
\begin{corollary}\label{isoclasscor}%+
Let $k$ be a finite or algebraically closed field (of characteristic 2). There is one isomorphism class of $k$-involutions of $G_k$.
\end{corollary}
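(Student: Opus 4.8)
The plan is to deduce this directly from \cref{thmAB} together with the corollary immediately preceding it, which between them reduce the enumeration of isomorphism classes of $k$-involutions of $G_k$ to counting the square classes $|k^*/(k^*)^2|$. So it suffices to verify that $|k^*/(k^*)^2| = 1$ in each of the two stated cases, and then quote that count.

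First I would handle the finite-field case: if $k$ is finite of characteristic $2$, then $k = \bbf_{2^r}$ for some $r \ge 1$, so \cref{serrelemma} applies verbatim and gives $|k^*/(k^*)^2| = 1$. (If one wants a self-contained argument, the Frobenius map $x \mapsto x^2$ is an injective group endomorphism of the finite group $k^*$, hence surjective, so every element of $k^*$ is a square.) Next the algebraically closed case: for $a \in k^*$ the polynomial $x^2 - a \in k[x]$ has a root in $k$ since $k = \ol{k}$, whence $a \in (k^*)^2$; thus again $|k^*/(k^*)^2| = 1$. In either case there is exactly one square class, so by \cref{thmAB} there is a single isomorphism class of $k$-involutions of $G_k$.

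I do not anticipate any real obstacle here, since the statement is essentially a specialization of \cref{thmAB}. The only point worth flagging is that \cref{serrelemma} as stated covers only finite fields, so the algebraically closed case does require the separate one-line observation about $x^2 - a$ splitting rather than a direct appeal to Serre's lemma; both reductions, however, are immediate.
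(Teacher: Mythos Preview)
Your proposal is correct and matches the paper's own argument: the paper invokes the preceding corollary (isomorphism classes correspond to square classes of $k^*$) and \cref{serrelemma} for the finite case, leaving the algebraically closed case implicit. Your write-up is slightly more explicit in handling the algebraically closed case via $x^2 - a$ splitting, but the approach is identical.
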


If $k$ is any infinite, non-algebraically closed field, then there are infinitely many non-squares in $k$. For example, when $k$ is the imperfect field $\bbf_q(x)$, for some indeterminate $x$, there are non-squares $x$, $x^3 + 1$, $x^5 + 1$, all of which are in different square classes.

\begin{corollary}
Let $k$ be any infinite non-algebraically closed field. There are infinitely many isomorphism classes of $k$-involutions of $G_k$.
\end{corollary}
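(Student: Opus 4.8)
The plan is to reduce the claim to a statement purely about the field $k$. By the corollary identifying the number of $\Inn(G,G_k)$-isomorphism classes of $k$-involutions of $G_k$ with the number of square classes of $k^*$, it suffices to show that $|k^*/(k^*)^2|$ is infinite. The structural fact to use is that in characteristic $2$ the squaring map is the Frobenius endomorphism of $k$: from $(a+b)^2=a^2+b^2$ and $(ab)^2=a^2b^2$ it follows that $k^2:=\{a^2:a\in k\}$ is a subfield of $k$ and that $(k^*)^2=(k^2)^*$. Hence two elements $x,y\in k^*$ lie in the same square class exactly when $x/y\in k^2$, that is, exactly when $x$ and $y$ are linearly dependent over the subfield $k^2$.

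The task is therefore to exhibit infinitely many elements of $k^*$ that are pairwise linearly independent over $k^2$; this is possible precisely when $k$ is imperfect, which is the case of genuine content here and the one witnessed by the examples $\bbf_q(x)$ recorded before the statement. I would fix $t\in k\setminus k^2$, so that $\{1,t\}$ is linearly independent over $k^2$, and then consider the family $\{\,t+c : c\in k^2\,\}$. For $c,c'\in k^2$, a relation $t+c=d^2(t+c')$ with $d\in k^*$ rearranges, using the independence of $1$ and $t$ over $k^2$, to $d^2=1$ and $c=c'$; hence distinct constants $c$ yield distinct square classes. Since Frobenius is injective and $k$ is infinite, $k^2$ is infinite, so this family is infinite and $|k^*/(k^*)^2|=\infty$, which gives the corollary.

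Beyond two routine verifications — that $(k^*)^2=(k^2)^*$, and that an infinite field has infinite Frobenius image — the point that deserves care is that imperfectness is used essentially. For a perfect infinite field that is not algebraically closed (for instance an infinite algebraic extension of $\bbf_2$ other than the algebraic closure, where every element is a square by the argument behind \cref{serrelemma}) there is only one square class, hence only one isomorphism class. So the statement is cleanest when $k$ is also assumed imperfect, and in the present generality the perfect case should be acknowledged as an exception.
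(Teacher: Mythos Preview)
Your reduction to counting square classes via the preceding corollary is exactly the paper's approach; the paper, however, offers no proof beyond the example $k=\bbf_q(x)$ and the bare assertion that an infinite non-algebraically closed field of characteristic $2$ has infinitely many square classes. Your argument via the Frobenius subfield $k^2$ and the family $\{t+c:c\in k^2\}$ is a correct and complete proof in the imperfect case.

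More importantly, your final paragraph identifies a genuine defect in the statement itself that the paper overlooks. An infinite \emph{perfect} field of characteristic $2$ that is not algebraically closed---for instance $\bigcup_{n\ge 0}\bbf_{2^{2^n}}$, which is a chain union of finite fields, hence perfect, yet omits $\bbf_{2^3}$---has Frobenius surjective, so $(k^*)^2=k^*$ and $|k^*/(k^*)^2|=1$. By the corollary on square classes there is then exactly one isomorphism class of $k$-involutions, contradicting the claim. The hypothesis should read ``imperfect'' (equivalently, in characteristic $2$, ``$k\ne k^2$'') rather than ``infinite non-algebraically closed'', and your proof establishes precisely that corrected statement.
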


Having described the $k$-involutions of $G_k$, we continue our analysis and describe the fixed-point groups and symmetric $k$-varieties. The fixed point group 
\[
H^p = \{x \in G_k \st \varphi(x) = x\}
\] of a $k$-involution of $G_k$ plays an important role in determining the structure of the corresponding symmetric $k$-variety $Q_k$.  For the involution $\Inn_A$, where $A = \begin{bsmallmatrix}0 & 1\\p & 0\end{bsmallmatrix}$, we have
\begin{align*}
H^p &= \big\{X \in G_k \st \Inn_A(X) = X\big\}\\
&= \left\{\begin{bmatrix} x_{11} & x_{12}\\px_{12} & x_{11}\end{bmatrix} \Bigg|\  x_{11}^2 + px_{12}^2 = 1\right\}.
\end{align*}

\begin{lemma}\label{isoH}%+
Let $\varphi_1$ and $\varphi_2$ be isomorphic $k$-involutions of $G_k$ with fixed point groups $H_1$ and $H_2$, respectively. Then $H_1$ and $H_2$ are isomorphic.
\end{lemma}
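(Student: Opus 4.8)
The plan is to show that the very automorphism $\sigma$ conjugating $\varphi_1$ to $\varphi_2$ carries $H_1$ isomorphically onto $H_2$. By the Definition preceding this lemma, saying that $\varphi_1$ and $\varphi_2$ are isomorphic $k$-involutions means there is some $\sigma \in \Aut(G, G_k)$ (in the inner case, $\sigma \in \Inn(G,G_k)$) with $\sigma \varphi_1 \sigma^{-1} = \varphi_2$. Since $\sigma$ fixes $G_k$ as a set, its restriction $\sigma|_{G_k}$ is a group automorphism of $G_k$; the whole proof then amounts to checking that this restriction sends the subgroup $H_1$ onto the subgroup $H_2$.

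First I would verify $\sigma(H_1) \subseteq H_2$. Take $x \in H_1$, so $\varphi_1(x) = x$. Then
\[
\varphi_2(\sigma(x)) = \big(\sigma \varphi_1 \sigma^{-1}\big)(\sigma(x)) = \sigma(\varphi_1(x)) = \sigma(x),
\]
so $\sigma(x) \in H_2$. Next, applying the same computation to $\sigma^{-1}$ (which also lies in $\Aut(G,G_k)$ and satisfies $\sigma^{-1}\varphi_2\sigma = \varphi_1$) gives $\sigma^{-1}(H_2) \subseteq H_1$, hence $H_2 \subseteq \sigma(H_1)$. Combining the two inclusions yields $\sigma(H_1) = H_2$.

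Finally, since $\sigma|_{G_k}$ is an injective group homomorphism and its image on $H_1$ is exactly $H_2$, the map $\sigma|_{H_1} \colon H_1 \to H_2$ is a group isomorphism, so $H_1 \cong H_2$. (If one wants the isomorphism of $k$-groups rather than merely abstract groups, one notes that $\sigma$ is a $k$-automorphism of $G$, so it restricts to a $k$-isomorphism of the $k$-subgroups $H_1$ and $H_2$.) I do not expect any real obstacle here: the only point requiring a moment's care is that one genuinely needs both inclusions — the conjugation identity alone gives $\sigma(H_1)\subseteq H_2$, and the reverse inclusion comes from running the argument with $\sigma^{-1}$.
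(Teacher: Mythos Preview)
Your proof is correct and follows essentially the same approach as the paper: take the conjugating automorphism $\sigma$ (the paper calls it $\psi$) and check that it carries $H_1$ onto $H_2$. If anything, your version is slightly more careful, since you explicitly run the argument with $\sigma^{-1}$ to obtain the reverse inclusion, whereas the paper only writes out $\psi(H_1)\subseteq H_2$ and then asserts $\psi$ is the required isomorphism.
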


\begin{proof}
Let $\psi \in \Aut(G,G_k)$. If $\psi\varphi_1\psi^{-1} = \varphi_2$, then it suffices to show that $\psi(H_1) = H_2$. For $X_1 \in H_1$, $\psi(\varphi_1(X_1)) = \psi(X_1) = \varphi_2(\psi(X_1)) \in H_2$ since $\varphi_2 (\psi(X_1)) = \psi(X_1)$. Hence $\psi:H_1 \to H_2$ is the required isomorphism.
\end{proof}

Let $Y \in H_k$ as above; then $C_Y(x) = (x + 1)^2$, and $Y$ is diagonalizable if and only if $Y = \begin{bsmallmatrix}1 & 0 \\ 0 & 1\end{bsmallmatrix}$. So whenever $|k| > 2$, $H_k$ consists of non-semisimple elements. Moreover, we shall soon see these fixed point groups consist of unipotent elements.

\begin{example}
When $k$ does not have characteristic 2 and $A = \begin{bsmallmatrix}0&1\\p&0\end{bsmallmatrix}$, then $\Inn_A$ has fixed point group
\[
H^p = \left\{\begin{bmatrix}x & y\\py & x\end{bmatrix}\Bigg| x^2 - py^2 = 1\right\}.
\]
For $k = \bbr$, either $p = 1$ or $p = -1$. For $k = \bbc$, $p = 1$. Wu \cite{wu02} showed that $H^p$ is $k$-split if and only if $p$ is a square in $k^*$, and otherwise $H^p$ is $k$-anisotropic. Thus $H^p$ is $k$-anisotropic if $k = \bbr$, and $H^p$ is non-compact if $k = \bbc$. In this case, the fixed point groups of $k$-involutions corresponding to semisimple matrices are reductive.
\end{example}

\begin{theorem}
If $A = \begin{bsmallmatrix}0&1\\p&0\end{bsmallmatrix}$ and $k$ has one square class, then $\Inn_A$ is isomorphic to $\Inn_B$ where 
\[
B = \begin{bmatrix}1 & \frac{1}{\sqrt{p}}\\0 & 1\end{bmatrix}.
\]
Moreover, the fixed-point group $H^p$ of $\Inn_A$ is isomorphic to a unipotent subgroup 
\[
H^p \cong 
\left\{\begin{bmatrix}1 & x\\0 & 1\end{bmatrix} \Bigg|\ x \in k\right\},
\]
hence every $k$-involution of $G_k$ has a unipotent fixed-point subgroup.
\end{theorem}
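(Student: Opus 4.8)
The plan is to derive both halves of the statement from results already in hand. Since $k$ has a single square class, $p \in (k^*)^2$, so $\sqrt{p} \in k^*$ and $B = \begin{bsmallmatrix} 1 & 1/\sqrt{p}\\ 0 & 1\end{bsmallmatrix}$ is a genuine matrix in $\GL(2,k)$, with $\sqrt{p}\,B = \begin{bsmallmatrix} \sqrt{p} & 1\\ 0 & \sqrt{p}\end{bsmallmatrix}$. Both $A = \begin{bsmallmatrix} 0 & 1\\ p & 0\end{bsmallmatrix}$ and $\sqrt{p}\,B$ have characteristic polynomial and minimal polynomial equal to $x^2 + p$, hence the same rational canonical form over $k$, so there is $C \in \GL(2,k)$ with $CAC^{-1} = \sqrt{p}\,B$. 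By \cref{cacinv}, taking the scalar to be $q = \sqrt{p} \in k$, this is precisely the condition for $\Inn_A$ and $\Inn_B$ to be isomorphic. (One could equally appeal to \cref{thmAB}: the invariant factors show $B$ is conjugate over $k$ to $\begin{bsmallmatrix} 0 & 1\\ 1 & 0\end{bsmallmatrix}$, so $\Inn_B$ represents the class with parameter $s = 1$, and $p/1 \in (k^*)^2$.)

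For the fixed-point group, I would use \cref{isoH}, which reduces the problem to computing $H_B = \{X \in G_k \st \Inn_B(X) = X\}$, the fixed points of the normalized involution, and noting that $H^p \cong H_B$. Here $\Inn_B(X) = X$ says $X$ commutes with $B$; writing $X = \begin{bsmallmatrix} a & b\\ c & d\end{bsmallmatrix}$, the single nonzero off-diagonal entry of $B$ forces $c = 0$ and $d = a$, and then $\det X = a^2 = 1$ forces $a = d = 1$ since squaring is injective in characteristic $2$. Thus
\[
H_B = \left\{\begin{bmatrix} 1 & b\\ 0 & 1\end{bmatrix} \Bigg|\ b \in k\right\},
\]
exactly the claimed unipotent subgroup. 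Alternatively, and without invoking $B$, one can argue intrinsically inside $H^p$: completing the square gives $x_{11}^2 + p x_{12}^2 = (x_{11} + \sqrt{p}\,x_{12})^2 = 1$, so $x_{11} = 1 + \sqrt{p}\,x_{12}$ and $H^p$ is the one-parameter family indexed by $x_{12} \in k$; a direct multiplication check shows that $x_{12} \mapsto$ (the corresponding matrix) is a group isomorphism from $(k,+)$ onto $H^p$, and since each such matrix has characteristic polynomial $(x+1)^2$ and is the identity only when $x_{12} = 0$, every element of $H^p$ is unipotent.

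Finally, the closing clause: by the corollary recorded just after the $\bbf_2$ example, every $k$-involution of $G_k$ is isomorphic to some $\Inn_A$ with $A = \begin{bsmallmatrix} 0 & 1\\ p & 0\end{bsmallmatrix}$ and $p \in k^*$; the previous paragraph identifies the fixed-point group of each such involution as a unipotent subgroup, and \cref{isoH} transports this along isomorphisms of involutions, so the fixed-point group of every $k$-involution of $G_k$ is unipotent.

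I expect no genuine difficulty here. The one point that needs care is that \cref{cacinv} matches $\Inn_A \cong \Inn_B$ not with ``$A$ conjugate to $B$'' but with ``$A$ conjugate over $k$ to some scalar multiple $qB$'', and identifying $q = \sqrt{p}$ — equivalently, arranging that $A$ and $qB$ share the characteristic polynomial $x^2 + q^2 s = x^2 + p$ with $q \in k$ — is exactly where the hypothesis that $k$ has one square class is used; everything else is a pair of short $2 \times 2$ computations.
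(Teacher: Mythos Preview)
Your proposal is correct and follows essentially the same route as the paper: both establish that $A$ is conjugate over $k$ to $\sqrt{p}\,B$ and then identify the fixed-point group of $\Inn_B$ with the upper-triangular unipotent subgroup. The only differences are cosmetic---the paper exhibits the explicit conjugating matrix $C=\begin{bsmallmatrix}1&0\\\sqrt{p}&1\end{bsmallmatrix}$ and transports $H^p$ through it, whereas you invoke rational canonical form for the conjugacy and compute the centralizer of $B$ directly; both shortcuts land in the same place.
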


\begin{proof}
Note that 
\[
\begin{bmatrix}1 & 0\\ \sqrt{p} & 1\end{bmatrix}
\begin{bmatrix}0 & 1\\ p & 0\end{bmatrix}
\begin{bmatrix}1 & 0\\ \sqrt{p} & 1\end{bmatrix}
= 
\begin{bmatrix}\sqrt{p} & 1\\0 & \sqrt{p}\end{bmatrix},
\]
so if $p \in (k^*)^2$, then $\Inn_A$ is isomorphic to $\Inn_B$ where 
\[
B = \begin{bmatrix}1 & \frac{1}{\sqrt{p}}\\0 & 1\end{bmatrix}. 
\]
The fixed-point group of $\Inn_B$ is
\[
\left\{\begin{bmatrix}x + \sqrt{p}y & y\\0 & x + \sqrt{p}y\end{bmatrix} \Bigg|\ x \in k\right\} \cong \left\{\begin{bmatrix}1 & x\\0 & 1\end{bmatrix} \Bigg|\ x \in k\right\}.
\]
\end{proof}

Since every $k$-involution of $G_k$ is isomorphic to conjugation by an element of the form $A = \begin{bsmallmatrix}0&1\\p&0\end{bsmallmatrix}$, for some $p \in k$, the elements of $Q_k \cong G_k/H_k$ have the following form:

\begin{align*}
Q_k &= \left\{X\theta(X)^{-1} \st X \in G\right\}\\
&= \left\{X\left[\Inn_A(X)\right]^{-1} \st X \in G\right\}\\
&= \left\{X\left[AXA^{-1}\right]^{-1} \st X \in G\right\}\\
&= \left\{XAX^{-1}A^{-1} \st X \in G\right\}\\
&= \left\{
\begin{bmatrix}
a & b \\
c & d
\end{bmatrix}
\begin{bmatrix}
0 & 1 \\
p & 0
\end{bmatrix}
\begin{bmatrix}
d & b \\
c & a
\end{bmatrix}
\begin{bmatrix}
0 & \frac{1}{p} \\
1 & 0
\end{bmatrix}
\Bigg|\ 
ad + bc = 1
\right\}\\
&= \left\{\begin{bmatrix}
pb^2 + a^2 & \frac{1}{p}(pbd + ac)\\
pbd + ac & \frac{1}{p}(pd^2 + c^2)
\end{bmatrix}
\Bigg|\ ad + bc = 1\right\}
\end{align*}

The minimal polynomial of any element $X \in Q_k$ has the form 
$$M_X(t) = t^2 + t\left(a^2 + d^2 + pb^2 + \frac{c^2}{p}\right) + \frac{1}{p}\left(pdb + ac\right)^2$$
which can be factored into distinct linear factors if and only if $a^2 + d^2 + pb^2 + \frac{c^2}{p} = 0$. That means that $X \in Q_k$ is semisimple exactly when $a^2 + d^2 + pb^2 + \frac{c^2}{p} \ne 0$.

%======================================================================
\section{$k$-involutions of $\SL(n,k)$ for $n > 2$}\label{slnk}%+
%======================================================================

In this section, we build upon the results from the previous section. As before, let $k$ be a field of characteristic 2, $K$ an extension field of $k$, $G = \SL(n,\ol{k})$, $G_K = \SL(n,K)$, and $G_k = \SL(n, k)$. 

Now that $n > 2$, there are both inner and outer automorphisms of $G_k$, and we must deal with each case separately. As before, begin by determining which of the inner automorphisms fix $G_k$ as a group and which fix $G_k$ point-wise.

\begin{lemma}\label{pId}%+
Suppose $A \in \GL(n,K)$. Then $\Inn_A|_{G_k} = \Id$ if and only if $A = p\Id$, for some $0 \ne p \in K$.
\end{lemma}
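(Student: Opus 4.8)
The plan is to mirror the proof of \cref{fixG}, with the two ad hoc test matrices replaced by the family of elementary transvections, which is available for every $n \ge 2$. Write $A = (a_{ij})$ with $a_{ij} \in K$. Since $\Inn_A|_{G_k} = \Id$, the matrix $A$ centralizes every element of $G_k = \SL(n,k)$ inside $\GL(n,K)$; in particular $AX = XA$ for each elementary matrix $X = \Id + e_{ij}$ with $i \ne j$, where $e_{ij}$ denotes the matrix unit, since such an $X$ has determinant $1$ and hence lies in $G_k$. This is equivalent to the system of equations $A e_{ij} = e_{ij} A$ for all $i \ne j$.

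Next I would extract the entrywise consequences. Comparing $(k,\ell)$-entries in $A e_{ij} = e_{ij} A$ yields $a_{ki}\,\delta_{\ell j} = \delta_{ki}\,a_{j\ell}$ for all $k, \ell$. Taking $\ell \ne j$ and $k = i$ forces $a_{j\ell} = 0$; since for every index $j$ there is some $i \ne j$ (this is where $n \ge 2$ is used), this shows every off-diagonal entry of $A$ vanishes. Taking $\ell = j$ and $k = i$ gives $a_{ii} = a_{jj}$, and letting $i, j$ range over all distinct pairs shows that all diagonal entries of $A$ equal a common value $p \in K$. Hence $A = p\,\Id$, and $p \ne 0$ since $A \in \GL(n,K)$ is invertible.

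The converse is immediate: $\Inn_{p\Id}(X) = (p\Id)X(p\Id)^{-1} = X$ for every $X \in G$, so $\Inn_{p\Id}|_{G_k} = \Id$.

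I do not expect a genuine obstacle here; the only points needing a moment's care are that the test matrices $\Id + e_{ij}$ really do lie in $\SL(n,k)$ --- true because transvections have determinant $1$ and $1 \in k$ --- and that there are enough of them, which is precisely where the hypothesis $n \ge 2$ enters. (Alternatively one could argue via Schur's lemma, using that the standard representation of $\SL(n,k)$ on $k^n$ is absolutely irreducible, so that its centralizer algebra remains $K \cdot \Id$ after extension of scalars to $K$; but the direct computation is more in keeping with the style of this section.)
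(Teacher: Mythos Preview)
Your proof is correct and follows essentially the same approach as the paper: both arguments test $A$ against the elementary transvections $X = \Id + e_{ij}$ and compare entries of $AX$ and $XA$ to force the off-diagonal entries to vanish and the diagonal entries to coincide. Your index bookkeeping is marginally cleaner (you deduce $a_{j\ell}=0$ directly from $k=i$, $\ell\ne j$, which already works for $n\ge 2$, whereas the paper's choice $i=t\ne r,s$ uses a third index and so implicitly relies on $n\ge 3$), but this is a cosmetic difference rather than a different route.
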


\begin{proof}
Following the same logic in the proof of \cref{fixG}, let $A \in \GL(n,K)$. By equating the $(i,j)$ entries of $AX$ and $XA$, we obtain the following equation:
\begin{equation}\label{AXeqXA}%+
\sum_{k = 1}^n a_{ik} x_{kj} = \sum_{\ell = 1}^n x_{i\ell}a_{\ell j}.
\end{equation}
Let $X$ be the matrix with 1 on the diagonal and in the $(r,s)$ position, and 0 elsewhere. For $i = r$ and $j = s$,\cref{AXeqXA} becomes
$$a_{r,s} + a_{r,r} = a_{s,s} + a_{r,s},$$
and hence $a_{r,r} = a_{s,s}$. By varying $r$ and $s$, it follows that $a_{i,i} = a_{j,j}$ for all $i$ and $j$. For $i = t$ and $j = s$ ($t \ne r, s$) \cref{AXeqXA} becomes
$$a_{t,r} + a_{t,s} = a_{t,s},$$
and hence $a_{t,r} = 0$. By varying $r,s,$ and $t$, it follows that $a_{i,j} = 0$ for all $i \ne j$.

As before, the converse is clear.
\end{proof}

\begin{lemma}\label{ApB}%+
Let $A \in \GL(n,K)$. Then $\Inn_A|_{G_k} \in \Aut(G_k)$ if and only if $A = pB$, for some $p \in K$ and $B \in \GL(n,k)$.
\end{lemma}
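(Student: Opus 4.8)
The plan is to mimic the proof of \cref{keepInv}, generalizing from $2\times 2$ to $n\times n$ matrices. The converse direction is trivial and identical to before: if $A = pB$ with $B \in \GL(n,k)$ then $\Inn_A = \Inn_B$ on $G$, and conjugation by $B \in \GL(n,k)$ preserves determinant, hence sends $G_k$ to itself, so $\Inn_A|_{G_k} \in \Aut(G_k)$. The work is in the forward direction.

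For the forward direction, suppose $\Inn_A|_{G_k} \in \Aut(G_k)$, so $AXA^{-1} \in G_k$ for every $X \in G_k$. First I would test $A$ against a well-chosen finite collection of matrices in $G_k$. The natural candidates are the elementary-type matrices $E_{rs} = \Id + e_{rs}$ (identity plus a single off-diagonal $1$), their sums with the identity, and permutation-like matrices, each of which lies in $\SL(n,k)$ in characteristic $2$. Writing out that the entries of $A(\Id + e_{rs})A^{-1}$ lie in $k$, and comparing with the fact that the entries of $A \Id A^{-1} = \Id$ trivially lie in $k$, one extracts that certain rational expressions in the $a_{ij}$ — essentially $a_{ir}a_{sj}/\det(A)$, or after clearing, ratios $a_{k\ell}/a_{ij}$ — must lie in $k$. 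Concretely, $A(\Id + e_{rs})A^{-1} = \Id + A e_{rs} A^{-1}$, and $A e_{rs} A^{-1}$ has $(i,j)$ entry equal to $a_{ir}(A^{-1})_{sj}$; since $A^{-1} = \det(A)^{-1}\adj(A)$, requiring all these entries to lie in $k$ for all $r,s$ gives $a_{ir}\,\adj(A)_{sj}/\det(A) \in k$ for all $i,j,r,s$.

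The key step is then a normalization argument. Fix one index pair $(i_0,j_0)$ with $a_{i_0 j_0} \neq 0$ (possible since $A$ is invertible), set $p = a_{i_0 j_0} \in K$, and show that $B := p^{-1}A$ has all entries in $k$. This requires showing each ratio $a_{ij}/a_{i_0 j_0} \in k$. As in \cref{keepInv}, the trick is to combine two of the extracted relations multiplicatively so that the $\det(A)$ factors cancel: from $a_{ij}\,\adj(A)_{k\ell}/\det A \in k$ and a companion relation one forms a product landing in $k$ that reduces to $a_{ij}/a_{i_0j_0}$. One must be slightly careful that the auxiliary matrices chosen are genuinely in $\SL(n,k)$ (determinant $1$) and that enough index combinations are available when $n > 2$ — but here the larger $n$ actually gives more freedom, not less, so the combinatorics is easier than the $2\times 2$ case. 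Finally, $\det B = \det(B) \in k$ since its entries are in $k$; there is no claim that $\det B = 1$, only that $B \in \GL(n,k)$, so nothing further is needed.

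The main obstacle I anticipate is purely bookkeeping: choosing the right handful of test matrices in $\SL(n,k)$ and the right multiplicative combination of the resulting relations so that the common denominator $\det(A)$ (which lives in $K$, not necessarily $k$) cancels cleanly, leaving a ratio of two entries of $A$ that is manifestly in $k$. Once that pattern is set up exactly as in the proof of \cref{keepInv}, the generalization to arbitrary $n$ is routine, and the converse is immediate.
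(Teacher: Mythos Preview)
Your proposal is correct and follows essentially the same route as the paper: it too tests $\Inn_A$ against the matrices $\Id + E_{pq}$ together with certain permutation-type matrices in $G_k$, extracts from the resulting $(i,j)$ entries the relations $a_{ip}\,\adj(A)_{qj}/\det(A)\in k$, and then divides two such relations (with a common nonzero adjugate entry) to cancel both $\det(A)$ and the adjugate factor, yielding $a_{ij}/a_{mn}\in k$ for a fixed nonzero $a_{mn}$. The only point the paper makes explicit that you leave to ``bookkeeping'' is the need for a second family of test matrices (a transposition and a near-transposition) to obtain the companion relation covering the case where the column index of $a$ coincides with the row index of the adjugate entry; once you have both relations, the division works for all index pairs.
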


\begin{proof}
Note that
\begin{equation}\label{innAX}%+
\Inn_A(X) = AXA^{-1} = \frac{1}{\det(A)}AXD,
\end{equation}
where $D$ is the adjugate or adjoint matrix of $A$. That is, $D$ is the transpose of the cofactor matrix of $A$. So the $(i,j)$ entry of \cref{innAX} is
\begin{equation}\label{ijinnAX}%+
\frac{1}{\det A} \left( \sum_{m = 1}^n\sum_{\ell = 1}^n a_{i,m}x_{m,\ell}d_{j,\ell}\right),
\end{equation}
an element of $k$. Setting $X = \Id$ yields
\[
\frac{1}{\det A}\left( a_{i,1}d_{j,1} + \dots  + a_{i,n}d_{j,n}\right),
\]
and setting $X = \Id + E_{pq}$ (for any $p \ne q$) yields
\[
\frac{1}{\det A}\left( a_{i,1}d_{j,1} + \dots  + a_{i,n}d_{j,n} + a_{i,p}d_{q,j}\right).
\]
The sum of these two expressions is
\begin{equation}\label{Apiaqj}%+
\frac{a_{i,p}d_{q,j}}{\det(A)} \in k.
\end{equation}
Let $X$ be the permutation matrix with rows $p$ and $q$ switched. Then \cref{innAX} becomes
\[
\frac{1}{\det(A)}\left(a_{i,q}d_{p,j} + a_{i,p}d_{q,j} + \sum_{\ell = 1 \atop \ell \ne p,q}^na_{i,\ell}d_{j,\ell}\right).
\]
Finally, let $X$ be as above with $x_{q,q} = 1$ instead of 0. Then \cref{innAX} becomes
\[
\frac{1}{\det(A)}\left(a_{i,q}d_{p,j} + (a_{i,p} + a_{i,q})d_{q,j} + \sum_{\ell = 1 \atop \ell \ne p,q}^na_{i,\ell}d_{j,\ell}\right).
\]
Adding these last two expressions yields 
\begin{equation}\label{Aqiaqj}%+
\frac{a_{i,q}d_{q,j}}{\det(A)}  \in k.
\end{equation}

By dividing \cref{Apiaqj} by \cref{Aqiaqj} and re-labeling indices as needed, we have
$$
\frac{\dfrac{a_{i,j}d_{m,\ell}}{\det(A)}}{\dfrac{a_{m,n}d_{m,\ell}}{\det(A)}}
=
\frac{a_{i,j}}{a_{m,n}} \in k
$$
for any $m,n,i,j \in \{1, 2, \dots n\}$, provided that $a_{m,n} \ne 0$. Now, by factoring $a_{m,n}$ from $A$, we get $A = a_{m,n}B = pB$ where $b_{i,j} = \frac{a_{i,j}}{a_{m,n}}$, and $\Inn_A = \Inn_B$. The converse is again clear.
\end{proof}

\begin{corollary}\label{innAglnk}%+
Any inner automorphism of $G_k$ can be written as conjugation by a matrix in $\GL(n,k)$.
\end{corollary}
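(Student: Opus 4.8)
The plan is to read the statement off directly from \cref{ApB}, since essentially all of its content is already contained there. Start with an arbitrary inner automorphism $\varphi$ of $G_k$; by definition there is an extension field $K$ of $k$ and a matrix $A \in \GL(n,K)$ with $\varphi = \Inn_A|_{G_k}$. Since $\varphi$ is in particular an automorphism of $G_k$ (it carries $G_k$ into itself), \cref{ApB} applies and produces a scalar $p \in K$ and a matrix $B \in \GL(n,k)$ with $A = pB$.

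The only remaining point is that the scalar factor is invisible to conjugation: for every $X \in G_k$ one has $\Inn_A(X) = (pB)X(pB)^{-1} = pBXB^{-1}p^{-1} = BXB^{-1} = \Inn_B(X)$, because $p\Id$ is central in $\GL(n,K)$. Hence $\varphi = \Inn_B|_{G_k}$ with $B \in \GL(n,k)$, which is the assertion.

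There is essentially no obstacle here: the passage from $A = pB$ to $\Inn_A = \Inn_B$ is the same scalar-cancellation observation already used (in the $n = 2$ setting) in the converse direction of \cref{keepInv} and in the proof of \cref{cacinv}, so the proof is a two-line deduction from \cref{ApB}. I would, however, append a remark recording the practical upshot — that from now on every inner $k$-automorphism under consideration may be assumed, without loss of generality, to be conjugation by a matrix with entries in $k$ — and possibly note, paralleling the parenthetical comment in \cref{keepInv}, that after rescaling $B$ one can frequently further arrange that the conjugating matrix lies in $G_k$ itself.
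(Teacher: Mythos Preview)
Your argument is correct and is exactly the intended deduction: the paper states this corollary without proof, treating it as immediate from \cref{ApB}, and the scalar-cancellation step you spell out is precisely the (trivial) content being invoked. Your added commentary about normalizing to $G_k$ is optional but harmless.
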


We recall \cref{cacinv} whose proof holds in the present situation.

\begin{lemma}\label{innAconjinnB}%+
Let $\Inn_A$ and $\Inn_B$ be automorphisms in $\Aut(G,G_k)$. Then $\Inn_A$ and $\Inn_B$ are isomorphic if and only if there is a matrix $C \in \GL(2,k)$ and a constant $p \in k$ such that $CAC^{-1} = pB$.
\end{lemma}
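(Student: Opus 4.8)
The plan is to transcribe the proof of \cref{cacinv} essentially verbatim, since the only place that argument used the hypothesis $n=2$ was the appeal to \cref{fixG}, and that result has now been upgraded to its $n$-dimensional analogue \cref{pId}. So first I would unwind the definition: $\Inn_A$ and $\Inn_B$ are isomorphic precisely when there is some $C$ with $\Inn_C \Inn_A \Inn_C^{-1} = \Inn_B$, and by \cref{innAglnk} we may take $C \in \GL(n,k)$.

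Next I would use the identity $\Inn_g \Inn_h = \Inn_{gh}$, valid for all $g,h \in G$, to rewrite the left-hand side as $\Inn_{CAC^{-1}}$. The condition then reads $\Inn_{CAC^{-1}} = \Inn_B$ as automorphisms of $G_k$, equivalently $\Inn_{B^{-1}CAC^{-1}}|_{G_k} = \Id$. Now I apply \cref{pId}: since $B^{-1}CAC^{-1} \in \GL(n,k)$, there is some $0 \ne p \in k$ with $B^{-1}CAC^{-1} = p\Id$, i.e. $CAC^{-1} = pB$. Reading the chain of equivalences in reverse yields the converse, because $CAC^{-1} = pB$ forces $\Inn_C \Inn_A \Inn_C^{-1} = \Inn_{pB} = \Inn_B$.

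I do not expect a genuine obstacle; the content is a routine repetition. The only points worth flagging are bookkeeping ones: one should observe that $B^{-1}CAC^{-1}$ lies in $\GL(n,k) \subseteq \GL(n,K)$ so that \cref{pId} applies and that the scalar $p$ then automatically lies in $k$ rather than merely in $K$; and the ambient identity being used, $\Inn_{B^{-1}CAC^{-1}}|_{G_k} = \Id$, is an equality of automorphisms of $G_k$, which is exactly the hypothesis format of \cref{pId}. (I also note that the matrix $C$ in the statement should read $C \in \GL(n,k)$, the $\GL(2,k)$ being a carryover from the $n=2$ version.)
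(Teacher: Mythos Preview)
Your proposal is correct and matches the paper's approach exactly: the paper simply states that the proof of \cref{cacinv} holds verbatim in the present situation, which is precisely the transcription you carry out with \cref{pId} replacing \cref{fixG}. Your additional remarks (taking $C\in\GL(n,k)$ via \cref{innAglnk}, and flagging the $\GL(2,k)$ typo in the statement) are apt and go slightly beyond what the paper records.
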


By \cref{pId}, if $\Inn_A \in \Aut(G,G_K)$ fixes $G_k$ point-wise, then $A = p\Id$ for some $p \in K$. By \cref{ApB}, we may take $p \in k$. Since $\Inn_A^2 = \Inn_{A^2}$, we must determine the restrictions on $A$ such that $A^2 = p\Id$. Recall that an involution is an automorphism of order exactly 2.

\begin{lemma}\label{asquaresln}%+
Suppose $A \in \GL(n,k)$ with $A^2 = p\Id$.
\begin{enumerate}
\item If $A = c\Id$ where $c^2 = p$ and $c \in k$, then $\Inn_A$ is not a $k$-involution.
\item If $A^2 + c^2\Id = 0$, where $c \in k$, and $A \ne c\Id$, then $A$ is conjugate to a matrix with $m$ copies of $L_{c^2}$ and $n - 2m$ copies of $c$ on the diagonal, as in
\begin{equation}\label{lmpc}%+
L_{m,c^2, c} = 
\begin{bmatrix}
\vspace{-.33cm}\\
\Lx{c^2} \\
& \ddots \\
& & \Lx{c^2}\\
& & & c \\
& & & & \ddots  \\
& & & & &  c\ 
\vspace{.035cm}
\end{bmatrix}.
\end{equation}
\item If $A^2 + p\Id = 0$, where $p \notin (k^*)^2$, then $A$ is conjugate to a matrix with $\frac{n}{2}$ copies of $L_p = \Lx{p}$ on the diagonal, as in
\begin{equation}\label{ln2p}%+
L_{\frac{n}{2},p} = 
\begin{bmatrix}
\vspace{-.33cm}\\
\Lx{p} & & \\
 & \ddots & \\
&  & \Lx{p}
\vspace{.1cm}
\end{bmatrix}.
\end{equation}
\end{enumerate}
\end{lemma}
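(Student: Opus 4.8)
The plan is to read the structure of $A$ off its rational canonical form over $k$, using that $A$ annihilates the quadratic $x^2+p$ (equivalently $x^2-p$, since $k$ has characteristic $2$), and to organize the argument by whether $p$ is a square in $k$. Observe first that the three cases are exhaustive and mutually exclusive: if $A^2=p\Id$ then either $p=c^2$ for some $c\in k$ — unique because $k$ has characteristic $2$ — placing us in case (1) or (2) according to whether $A=c\Id$; or else $p\notin(k^*)^2$, which is case (3). Case (1) is immediate: if $A=c\Id$ is scalar then $\Inn_A=\Id$ by \cref{pId}, which has order $1$ and so is not a $k$-involution.

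For case (2), set $N=A+c\Id=A-c\Id$. The hypothesis $A^2+c^2\Id=0$ together with characteristic $2$ gives $N^2=A^2+c^2\Id=0$ (the cross term $2cA$ vanishes), so $N$ is nilpotent of index at most $2$, and $N\neq 0$ because $A\neq c\Id$. Hence the minimal polynomial of $N$ is exactly $x^2$ and every elementary divisor of $N$ over $k$ is $x$ or $x^2$, with at least one copy of $x^2$. Writing $m\geq 1$ for the number of copies of $x^2$, $N$ is conjugate over $k$ to the block-diagonal matrix with $m$ blocks $\begin{bsmallmatrix}0&1\\0&0\end{bsmallmatrix}$ and $n-2m$ zero entries on the diagonal; adding $c\Id$ back shows $A$ is conjugate to the block matrix with $m$ blocks $\begin{bsmallmatrix}c&1\\0&c\end{bsmallmatrix}$ and $n-2m$ copies of $c$. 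Finally, $\begin{bsmallmatrix}c&1\\0&c\end{bsmallmatrix}$ and $\Lx{c^2}$ are conjugate because both are non-scalar $2\times 2$ matrices with characteristic polynomial $x^2+c^2$, hence both agree up to conjugacy with the companion matrix of $x^2+c^2$. Conjugating block by block yields the normal form $\lmc$ of \eqref{lmpc}.

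For case (3), the minimal polynomial $M_A(x)$ divides $x^2+p$ and cannot be linear: $M_A(x)=x+q$ would force $A=q\Id$ with $q^2=p$, contradicting $p\notin(k^*)^2$. So $M_A(x)=x^2+p$, and this polynomial is irreducible over $k$, since a quadratic is irreducible exactly when it has no root in $k$ and $p$ is not a square. Every invariant factor of $A$ therefore equals $x^2+p$, so by rational canonical form $A$ is conjugate over $k$ to a block-diagonal matrix all of whose blocks are the companion matrix of $x^2+p$; that companion matrix is conjugate to $\Lx{p}$, and a dimension count forces $n$ to be even with exactly $n/2$ blocks. This is the form $\lnp$ of \eqref{ln2p}.

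This argument is essentially bookkeeping with canonical forms; the one place to be careful — rather than a genuine obstacle — is that every canonical-form statement must be invoked over the possibly imperfect field $k$. This is not a problem: rational canonical form is available over any field, the Jordan form for $N$ is available because its only eigenvalue $0$ lies in $k$, and the closing $2\times 2$ conjugacy uses only that a non-scalar $2\times 2$ matrix is determined up to conjugacy by its characteristic polynomial. It is worth flagging (as the introduction already does) that over $\ol{k}$ the polynomial $x^2+p$ becomes a square, so even in case (3) the matrix $A$ is a union of size-$2$ Jordan blocks and is never semisimple; the block $\Lx{p}$ survives only because $x^2+p$ remains irreducible over $k$.
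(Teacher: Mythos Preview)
Your proof is correct and follows essentially the same approach as the paper: both arguments read off the structure of $A$ from its invariant factors, using that the minimal polynomial divides $x^2+p$ and splitting on whether $p$ is a square. The only cosmetic difference is that in case~(2) you pass to the nilpotent matrix $N=A+c\Id$ and invoke its Jordan form before shifting back, whereas the paper applies rational canonical form directly to $A$ with invariant factors $(x+c)$ and $(x+c)^2$; these yield the same block decomposition.
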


\begin{proof}
~
\begin{enumerate}
\item[2.] Since the minimal polynomial of $A$ is $M_A(x) = x^2 + c^2 = (x + c)^2$, and the invariant factors of $A$ must divide $M_A(x)$, we know some of the structure of the rational canonical form of $A$. Without knowing $A$ explicitly we can not determine the multiplicities of the invariant factors, so there may be any combination of invariant factors of the form $(x + c)^2$ or $(x + c)$. The only constraint is that the sum of the degrees of the invariant factors equals $n$, and that there is at least one factor of $(x + c)^2$. Therefore $A = L_{m,c^2,c}$, where $m$ varies from $1$ to $n/2$.
\item[3.] The minimal polynomial is $M_A(x) = x^2 + p$, where $p$ is not a square. The characteristic polynomial is $C_A(x) = (x^2 + p)^{n/2}$, and here, the invariant factors are all $M_A(x)$, since they are forced to divide $M_A(x)$. Since the sum of the degrees of all the invariant factors must equal the degree of $C_A(x) = n$, the dimension of $A$ is even. Therefore, $A$ is a direct sum of $\frac{n}{2}$ blocks of $L_p$, as in $L_{\frac{n}{2},p}$. 
\end{enumerate}
Notice that the last case only occurs when $k$ is both infinite and {\em not} algebraically closed.
\end{proof}

\begin{corollary}\label{corcharInnA}%+
Suppose $\varphi \in \Aut(G, G_k)$ is an inner involution. There is a matrix $A \in \GL(n,k)$ such that $\varphi  = \Inn_A$ and $A$ is conjugate to one of the following:
\begin{enumerate}
\item $L_{m,c^2,c}$ for $c \in k^*$.
\item $L_{\frac{n}{2}, p}$ for some $p \in k^*/(k^*)^2$.
\end{enumerate}
\end{corollary}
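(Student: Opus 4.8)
The plan is to assemble \Cref{corcharInnA} directly from the pieces already in place, namely \Cref{pId}, \Cref{ApB}, and \Cref{asquaresln}. First I would recall that any inner involution $\varphi$ of $G_k$ is of the form $\Inn_A$ for some $A \in \GL(n,K)$ with $\Inn_A$ invariant on $G_k$; by \Cref{ApB} we may write $A = pB$ with $B \in \GL(n,k)$, and since $\Inn_A = \Inn_B$ we lose nothing by assuming $A \in \GL(n,k)$ outright. Then $\Inn_A^2 = \Inn_{A^2} = \Id$ on $G_k$, so $A^2$ centralizes $G_k$, and \Cref{pId} forces $A^2 = p\,\Id$ for some $0 \ne p \in k$ (the scalar lies in $k$ because $A$ has entries in $k$).

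Next I would split into cases according to whether $p$ is a square in $k^*$. If $p \in (k^*)^2$, write $p = c^2$ with $c \in k$; then $A^2 + c^2\Id = 0$. Case (1) of \Cref{asquaresln} says that if $A = c\,\Id$ then $\Inn_A$ is trivial, contradicting that $\varphi$ is an involution (order exactly $2$), so $A \ne c\,\Id$, and case (2) of \Cref{asquaresln} then gives that $A$ is conjugate to $L_{m,c^2,c}$ for some $m$ with $1 \le m \le n/2$. If instead $p \notin (k^*)^2$, then $A^2 + p\Id = 0$ with $p$ a non-square, and case (3) of \Cref{asquaresln} gives that $n$ is even and $A$ is conjugate to $L_{\frac{n}{2},p}$; here $p$ is only well-defined up to square classes since replacing $A$ by $qA$ for $q \in k^*$ replaces $p$ by $q^2 p$, so the invariant is the class of $p$ in $k^*/(k^*)^2$. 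Collecting these two cases yields exactly the two alternatives in the statement.

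The content of the corollary is essentially a repackaging, so there is no serious obstacle; the one point requiring a little care is the bookkeeping on scalars. Specifically, I want to make sure that the freedom $A \mapsto qA$ (which does not change $\Inn_A$) is used consistently: in the square case it lets us normalize so that the relevant scalar on the diagonal blocks is literally $c$ with $c^2 = p$, while in the non-square case it shows that only the square class of $p$ is an invariant of the conjugacy class of $\Inn_A$, which is why the parametrization is by $p \in k^*/(k^*)^2$. I would also note explicitly, as \Cref{asquaresln} already records, that the third alternative is vacuous unless $k$ is infinite and not algebraically closed, so for finite or algebraically closed $k$ every inner involution is conjugate to some $L_{m,c^2,c}$.

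\begin{proof}
By \Cref{innAglnk} we may assume $\varphi = \Inn_A$ with $A \in \GL(n,k)$. Since $\varphi$ is an involution, $\Inn_{A^2} = \Inn_A^2 = \Id$ on $G_k$, so by \Cref{pId} we have $A^2 = p\,\Id$ for some $0 \ne p \in K$; as the entries of $A$ lie in $k$, in fact $p \in k^*$. If $p = c^2$ for some $c \in k^*$, then $A^2 + c^2\Id = 0$. By part (1) of \Cref{asquaresln}, if $A = c\,\Id$ then $\Inn_A = \Id$, contradicting that $\varphi$ has order $2$; hence $A \ne c\,\Id$, and part (2) of \Cref{asquaresln} shows $A$ is conjugate to $L_{m,c^2,c}$ for some $m$ with $1 \le m \le n/2$. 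If instead $p \notin (k^*)^2$, then $A^2 + p\,\Id = 0$, and part (3) of \Cref{asquaresln} shows that $n$ is even and $A$ is conjugate to $L_{\frac{n}{2},p}$. Finally, replacing $A$ by $qA$ with $q \in k^*$ leaves $\Inn_A$ unchanged and replaces $p$ by $q^2 p$, so in the second case the isomorphy class of $\varphi$ depends only on the image of $p$ in $k^*/(k^*)^2$.
\end{proof}
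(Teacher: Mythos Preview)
Your proof is correct and follows essentially the same route as the paper's: use \Cref{innAglnk} to take $A \in \GL(n,k)$, apply \Cref{pId} to get $A^2 = p\,\Id$, and then invoke \Cref{asquaresln} to obtain the two canonical forms. You have simply spelled out the case split on whether $p$ is a square and the exclusion of $A = c\,\Id$ more explicitly than the paper does, and added the remark on the square-class invariance of $p$, which the paper defers to \Cref{charln2p}.
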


\begin{proof}
By \cref{innAglnk}, there is a matrix $A \in \GL(n,k)$ so that $\varphi = \Inn_A$. Since $\varphi$ is an involution, we have $\varphi^2 = \Inn_{A^2} = \Id$. Moreover, by \cref{pId}, $A^2 = p\Id$ for some $p \in k$. It follows from \cref{asquaresln} that $A$ is conjugate to either $L_{\frac{n}{2},p}$ or $L_{m,c^2,c}$.
\end{proof}

Notice the special case when $L_{m,c^2,c}$ and $L_{\frac{n}{2},p}$ appear to be the same; this occurs when there are $n/2$ blocks of $L_{c^2}$ as in $L_{\frac{n}{2}, c^2, c}$, for $n$ even. The difference in $L_{\frac{n}{2},p}$ is that $p \notin (k^*)^2$, whereas in $L_{m,c^2,c}$, $c^2\in (k^*)^2$, so $c \in k$. This distinction partially determines isomorphism classes of $k$-involutions.

Also note that $L_{m,c^2,c}$ and $L_{\frac{n}{2},p}$ have different minimal and characteristic polynomials and different invariant factors, so they are not conjugate. Moreover, whenever $m_1 \ne m_2$, by the same argument $L_{m_1, c^2, c}$ is not conjugate to $L_{m_2,c^2,c}$. This is because the number of blocks of $L_{c^2}$ is different, which means they have different numbers of invariant factors.

Our next step is to determine when $L_{m,c^2,c}$ and $L_{m,d^2,d}$ correspond to isomorphic $k$-involutions and when $L_{\frac{n}{2},p}$ and $L_{\frac{n}{2},q}$ correspond to isomorphic $k$-involutions.

\begin{lemma}\label{charln2p}%+
$\Inn_{L_{\frac{n}{2}, p}}$ is isomorphic to $\Inn_{L_{\frac{n}{2}, q}}$ if and only if $p/q \in (k^*)^2$.
\end{lemma}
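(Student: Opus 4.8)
The plan is to mirror the proof of \cref{thmAB}, using \cref{innAconjinnB} (the $n>2$ analog of \cref{cacinv}) to turn the question about isomorphy of involutions into one about conjugacy of matrices up to a nonzero scalar, and then to decide that conjugacy by comparing rational canonical forms. Concretely, \cref{innAconjinnB} gives that $\Inn_{L_{\frac n2,p}}$ is isomorphic to $\Inn_{L_{\frac n2,q}}$ if and only if there exist $C\in\GL(n,k)$ and $\lambda\in k^*$ with $C\,L_{\frac n2,p}\,C^{-1}=\lambda\,L_{\frac n2,q}$; that is, if and only if $L_{\frac n2,p}$ is conjugate in $\GL(n,k)$ to $\lambda\,L_{\frac n2,q}$ for some $\lambda\in k^*$. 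Since similar matrices over a field are exactly those with the same invariant factors, the whole statement reduces to identifying the invariant factors of both sides.

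Next I would record the relevant canonical data. Because $p\notin(k^*)^2$, the polynomial $x^2+p$ is irreducible over $k$, and $L_{\frac n2,p}$ is block diagonal with $n/2$ companion blocks of $x^2+p$; hence its minimal polynomial is $x^2+p$ and its invariant factors are $n/2$ copies of $x^2+p$. On the other side, $\lambda\,L_{\frac n2,q}$ is block diagonal with $n/2$ identical blocks $\lambda\,\Lx{q}=\begin{bsmallmatrix} 0 & \lambda \\ \lambda q & 0 \end{bsmallmatrix}$, each of which has trace $0$ and determinant $\lambda^2 q$, hence characteristic polynomial $x^2+\lambda^2 q$; since this block is not scalar (its $(1,2)$ entry $\lambda$ is nonzero), its minimal polynomial is also $x^2+\lambda^2 q$.

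Then I would conclude in both directions. If $L_{\frac n2,p}$ is conjugate to $\lambda\,L_{\frac n2,q}$, comparing minimal polynomials forces $x^2+p=x^2+\lambda^2 q$, so $p=\lambda^2 q$ and $p/q=\lambda^2\in(k^*)^2$. Conversely, if $p/q=\lambda^2$ for some $\lambda\in k^*$, then $\lambda\,L_{\frac n2,q}$ has minimal polynomial $x^2+\lambda^2 q=x^2+p$, which is irreducible; so its invariant factors are again $n/2$ copies of $x^2+p$, matching those of $L_{\frac n2,p}$, and the two matrices are conjugate in $\GL(n,k)$. By \cref{innAconjinnB}, $\Inn_{L_{\frac n2,p}}$ and $\Inn_{L_{\frac n2,q}}$ are therefore isomorphic.

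I do not expect a serious obstacle; the only points needing a little care are (i) verifying that the scalar multiple $\lambda\,\Lx{q}$ is still a $2\times2$ block whose characteristic polynomial equals its minimal polynomial (so that the count of invariant factors on the two sides is the same), and (ii) using that $x^2+p$ is irreducible — which is exactly the hypothesis $p\notin(k^*)^2$ built into the definition of $L_{\frac n2,p}$ — so that equality of minimal polynomials already pins down the full rational canonical form and hence forces conjugacy.
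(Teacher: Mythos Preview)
Your proposal is correct and follows essentially the same route as the paper: invoke \cref{innAconjinnB} to reduce isomorphy of the involutions to conjugacy of $L_{\frac{n}{2},p}$ with a scalar multiple $\lambda L_{\frac{n}{2},q}$, then compare minimal (and hence characteristic) polynomials to conclude $p=\lambda^2 q$. If anything, you are a bit more explicit than the paper in justifying the converse direction and in noting why the $2\times 2$ block $\lambda\Lx{q}$ is non-scalar so that its invariant factors are determined.
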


\begin{proof}
By \cref{ln2p}, $p$ and $q \in k^*/(k^*)^2$, and by \cref{innAconjinnB}, $\Inn_{L_{\frac{n}{2}, p}}$ is isomorphic to $\Inn_{L_{\frac{n}{2}, q}}$ if and only if $A = L_{\frac{n}{2},p}$ is conjugate to $cB = cL_{\frac{n}{2},q}$ for some $c \in k$. The minimal polynomials are $M_A(x) = x^2 + p$ and $M_{cB}(x) = x^2 + c^2q$, and  the characteristic polynomials are $C_A(x) = (x^2 + p)^{n/2}$ and $C_B(x) = (x^2 + c^2q)^{n/2}$, respectively. This means that $A$ and $cB$ are conjugate if and only if $c^2q = p$, since the invariant factors are fixed by the minimal polynomials. But this implies that $c^2 = p/q$. Thus $p/q \in (k^*)^2$.
\end{proof}

\begin{lemma}\label{charlmc}%+
Let $b, c \in k^*$. Then $\Inn_{L_{m,b^2,b}}$ is isomorphic to $\Inn_{L_{m,c^2,c}}$. 
\end{lemma}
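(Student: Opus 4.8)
The plan is to invoke \cref{innAconjinnB}: since $\Inn_{L_{m,b^2,b}}$ and $\Inn_{L_{m,c^2,c}}$ are isomorphic exactly when there exist $C \in \GL(n,k)$ and $p \in k^*$ with $C\,L_{m,b^2,b}\,C^{-1} = p\,L_{m,c^2,c}$, it suffices to show that $L_{m,b^2,b}$ and $p\,L_{m,c^2,c}$ are conjugate over $k$ for a well-chosen scalar $p$. The whole point is that a single rescaling can absorb the difference between $b$ and $c$, and the natural choice is $p = bc^{-1} \in k^*$, so that $pc = b$.

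With this choice I would compute the rational-canonical-form data of $p\,L_{m,c^2,c}$ directly from its block structure. Each of the $m$ blocks $p\begin{bsmallmatrix} 0 & 1\\ c^2 & 0\end{bsmallmatrix}$ is non-scalar with characteristic polynomial $x^2 + p^2c^2 = (x+pc)^2 = (x+b)^2$ (using $\ch k = 2$), which is therefore also its minimal polynomial; each of the $n-2m$ diagonal entries becomes $pc = b$. Hence $p\,L_{m,c^2,c}$ has characteristic polynomial $(x+b)^n$ and minimal polynomial $(x+b)^2$, and its elementary divisors are $m$ copies of $(x+b)^2$ together with $n-2m$ copies of $(x+b)$ — precisely the elementary divisors of $L_{m,b^2,b}$. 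Since two matrices over a field are conjugate if and only if they have the same elementary divisors, $p\,L_{m,c^2,c}$ is conjugate in $\GL(n,k)$ to $L_{m,b^2,b}$, and \cref{innAconjinnB} then yields the claimed isomorphism.

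I do not expect a genuine obstacle here; the only thing worth flagging is the contrast with \cref{charln2p}. In the $\lnp$ case one may only rescale by $c^2$, so the invariant survives modulo squares, whereas here the eigenvalue $c$ lies in $k^*$ and scaling by $p = bc^{-1}$ moves it to any other prescribed element of $k^*$ without changing the number or sizes of the blocks; that is exactly why all the $\Inn_{L_{m,c^2,c}}$ (for a fixed $m$) collapse into a single isomorphy class. If a fully explicit $C$ were wanted, one could take the block-diagonal conjugator built from the $2\times 2$ change-of-basis matrices realizing $\begin{bsmallmatrix}0 & p\\ pc^2 & 0\end{bsmallmatrix}\sim\Lx{b^2}$ on each nontrivial block and the identity on the diagonal part, but the rational canonical form argument makes this unnecessary.
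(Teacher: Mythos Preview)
Your proof is correct and follows essentially the same route as the paper: both invoke \cref{innAconjinnB}, choose the scalar $p = b/c$, and verify conjugacy of $L_{m,b^2,b}$ with $pL_{m,c^2,c}$ via their rational canonical form data. Your formulation in terms of elementary divisors is in fact slightly cleaner than the paper's minimal-polynomial comparison, but the underlying argument is the same.
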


\begin{proof}
By \cref{innAconjinnB}, $\Inn_{L_{m,b^2,b}}$ is isomorphic to $\Inn_{L_{m,c^2,c}}$ if and only if  $A = L_{m,b^2,b} \cong tB = tL_{m,c^2,c}$ for some $t \in k$. By \cref{pId}, the minimal polynomials are $M_A(x) = (x^2 + b^2)(x + b)$ and $M_{tB}(x) = (x^2 + t^2c^2)(x + tc)$. If $A$ and $tB$ are conjugate, their invariant factors, characteristic polynomials, and minimal polynomials are equal. So they are conjugate if and only if $tc = b$ and $t^2c^2 = b^2$. By setting $t = b/c$, it follows that $\Inn_{L_{m,b^2,b}}$ is isomorphic to $\Inn_{L_{m,c^2,c}}$.
\end{proof}

\begin{theorem}\label{innerslnkclasses}%+
Suppose $\varphi \in \Aut(G, G_k)$ is an inner $k$-involution. Then, up to isomorphism, $\varphi$ is of the form 
\begin{enumerate}
\item $\Inn_A$ where $A = L_{\frac{n}{2},p}$ for some $p \in k^*/(k^*)^2$.
\item $\Inn_A$ where $A = L_{m, c^2, c}$ for some $c \in k^*$.
\end{enumerate} 
\end{theorem}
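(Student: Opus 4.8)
The plan is to assemble the theorem from the structural results already in hand, so the argument is mostly bookkeeping. By \cref{corcharInnA}, any inner $k$-involution $\varphi$ can be written as $\Inn_A$ with $A \in \GL(n,k)$ conjugate either to $L_{m,c^2,c}$ for some $c \in k^*$ and some $m$ with $1 \le m \le n/2$, or to $L_{\frac{n}{2},p}$ for some $p \in k^*$ with $p \notin (k^*)^2$. First I would make explicit that this conjugation can be taken over $k$: since $A$ and the relevant normal form both have entries in $k$ and agree with the rational canonical form computed in the proof of \cref{asquaresln}, there is $C \in \GL(n,k)$ with $A = CLC^{-1}$, where $L$ is one of the two normal forms. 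Then \cref{innAconjinnB}, applied with scalar $1$, gives that $\Inn_A$ and $\Inn_L$ are $\Inn(G,G_k)$-isomorphic, so without loss of generality $\varphi = \Inn_L$.

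Next I would remove the redundancy within each family. For $L_{m,c^2,c}$, \cref{charlmc} shows that $\Inn_{L_{m,b^2,b}}$ and $\Inn_{L_{m,c^2,c}}$ are isomorphic for all $b,c \in k^*$, so the value of $c$ is immaterial (one may fix any $c \in k^*$, e.g. $c=1$) and the surviving parameter is $m$; this yields case (2). For $L_{\frac{n}{2},p}$, \cref{charln2p} shows $\Inn_{L_{\frac{n}{2},p}} \cong \Inn_{L_{\frac{n}{2},q}}$ precisely when $p/q \in (k^*)^2$, so the isomorphism class depends only on the image of $p$ in $k^*/(k^*)^2$; this yields case (1). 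Combining the two reductions, every inner $k$-involution is isomorphic to one appearing in the stated list, which is the theorem.

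Since the pieces are all prior lemmas, the only step requiring genuine care is the descent of the conjugating matrix to $\GL(n,k)$; I would justify it by recalling that matrices over $k$ conjugate over an extension field are already conjugate over $k$ (same invariant factors, which we have been tracking throughout), so that the isomorphisms produced really are $\Inn(G,G_k)$-isomorphisms rather than isomorphisms merely over $\ol{k}$. I would also note — echoing the paragraph preceding the theorem — that the list is presented for completeness and is not asserted to be irredundant: distinct $m$ give non-conjugate matrices, distinct square classes of $p$ give non-isomorphic involutions, and the only overlap to keep in mind is the borderline case $L_{\frac{n}{2},c^2,c}$ versus $L_{\frac{n}{2},p}$, which is separated exactly by whether the relevant scalar lies in $(k^*)^2$.
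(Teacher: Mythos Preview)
Your proposal is correct and follows essentially the same route as the paper: the paper's proof is the one-line statement ``Immediate from \cref{charln2p}, \cref{charlmc}, and \cref{corcharInnA},'' and your argument is exactly this, fleshed out with the (appropriate) observation that the rational-canonical-form conjugation occurs over $k$ so that \cref{innAconjinnB} applies. The additional remarks about irredundancy and the borderline case are accurate and mirror the discussion in the paragraph preceding the theorem.
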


\begin{proof}
Immediate from \cref{charln2p}, \cref{charlmc}, and \cref{corcharInnA}.
\end{proof}

As when $n=2$, the isomorphism classes of $k$-involutions of $\SL(n,k)$ depend on the number of square classes in $k^*$. All inner $k$-involutions of $G_k$ correspond to matrices which are not semisimple. However, there exist unipotent elements which correspond to inner $k$-involutions, as when $n = 2$.

When $k$ does not have characteristic 2, there are several differences as compared to the characteristic not 2 case. First, the matrices $I_{n-i,i}$ are diagonal matrices which correspond to $k$-involutions, so there are semisimple elements which correspond to $k$-involutions whenever $k$ is perfect with characteristic not 2. 

The $k$-involution $\Inn_{L_{m,c^2,c}}$ is unique to fields of characteristic 2. By \cref{innerslnkclasses}, whenever $k$ is algebraically closed or finite (and all field elements have unique squares), the $k$-involutions may be represented by either $\Inn_{L_{\frac{n}{2},1}}$ or $\Inn_{L_{m,1,1}}$. In summary, we have the following corollary to \cref{innerslnkclasses}, analogous to the corollary to the $n=2$ case.

\begin{corollary}
~
\begin{enumerate}
\item If $k$ is any finite field or algebraically closed field of characteristic 2 there is one isomorphism class of inner $k$-involutions of $\SL(n,k)$ corresponding to $\lnp$, and there are $\lfloor\frac{n}{2}\rfloor$ isomorphism classes corresponding to $\lmc$.
\item Otherwise, there are infinitely many isomorphism classes of inner $k$-involutions of $\SL(n,k)$ corresponding to $\lnp$, and there are  $\lfloor\frac{n}{2}\rfloor$ isomorphism classes corresponding to $\lmc$.
\end{enumerate}
\end{corollary}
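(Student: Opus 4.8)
The plan is to count the two families of inner $k$-involutions produced by \cref{innerslnkclasses} separately and then split into cases according to whether $k^*$ has a single square class. By that theorem every inner $k$-involution is isomorphic to some $\Inn_{\lnp}$ with $p \in k^*/(k^*)^2$ or to some $\Inn_{\lmc}$ with $c \in k^*$ and $1 \le m \le \lfloor n/2 \rfloor$, and no involution of the first kind is isomorphic to one of the second kind, since the representing matrices have different minimal polynomials — in $\lnp$ one has $p \notin (k^*)^2$, whereas $\lmc$ has eigenvalue $c \in k$. So it is enough to count the isomorphism classes inside each family.

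For the family $\Inn_{\lmc}$, \cref{charlmc} shows that the isomorphism class is independent of $c \in k^*$, so there is at most one class for each admissible value of $m$, i.e. at most $\lfloor n/2\rfloor$ in total. For the matching lower bound I would promote the invariant-factor remark that follows \cref{corcharInnA} to scalar multiples: for $m_1 \ne m_2$ and any $t \in k^*$ the matrix $tL_{m_2,d^2,d}$ is a direct sum of $m_2$ non-scalar $2\times 2$ blocks and $n - 2m_2$ scalar blocks, hence has $n - m_2$ invariant factors, which differs from the $n - m_1$ invariant factors of $L_{m_1,c^2,c}$; so these matrices are never conjugate and, by \cref{innAconjinnB}, the two involutions are not isomorphic. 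Hence this family contributes exactly $\lfloor n/2\rfloor$ classes regardless of $k$, which is why the same number appears in both parts of the statement.

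For the family $\Inn_{\lnp}$ — nonempty only when $n$ is even — \cref{charln2p} gives $\Inn_{\lnp} \cong \Inn_{L_{\frac{n}{2},q}}$ exactly when $p/q \in (k^*)^2$, so the number of classes here is $|k^*/(k^*)^2|$. If $k$ is finite this index is $1$ by \cref{serrelemma}; if $k$ is algebraically closed then $x \mapsto x^2$ is surjective on $k^*$, so again it is $1$, and part (1) follows by adding the $\lfloor n/2\rfloor$ classes of the other family. If instead $k$ is infinite and not algebraically closed, then $k^*$ has infinitely many square classes — the observation already recorded in the excerpt (with $x$, $x^3+1$, $x^5+1$ in $\bbf_q(x)$ as a sample) — so this family has infinitely many classes while the other count is still $\lfloor n/2\rfloor$, giving part (2). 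The only thing to handle with care, which is bookkeeping rather than a genuine obstacle, is the degenerate overlap: when every nonzero element of $k$ is a square, $L_{\frac{n}{2},c^2}$ literally equals $L_{\frac{n}{2},c^2,c}$, so one must be explicit that the single class ``corresponding to $\lnp$'' is the $m = n/2$ instance and is not also being counted among the $\lfloor n/2\rfloor$ classes of the second type, and that for odd $n$ this $\lnp$ class does not occur.
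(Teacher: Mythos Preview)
The paper offers no proof of this corollary; it is presented as an immediate consequence of \cref{innerslnkclasses}, \cref{charln2p}, \cref{charlmc}, and the remark after \cref{corcharInnA} that distinct values of $m$ yield nonconjugate matrices. Your argument spells out precisely this reasoning and is in the same spirit, only more careful: you upgrade the ``different $m$'' observation to cover scalar multiples (which is what \cref{innAconjinnB} actually demands), and you rightly flag the overlap $L_{\frac{n}{2},c^2}=L_{\frac{n}{2},c^2,c}$ and the odd-$n$ vacuity that the paper's wording elides.

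One caveat worth recording, though it is a defect you inherit from the paper rather than introduce: the assertion that every infinite, non-algebraically-closed field of characteristic $2$ has infinitely many square classes is false. Any infinite \emph{perfect} field of characteristic $2$ that is not algebraically closed---for instance $\bigcup_{n\ge 0}\bbf_{2^{3^n}}$, which omits $\bbf_4$---satisfies $k^*=(k^*)^2$, hence has a single square class, yet falls under part~(2) as stated. The correct dichotomy for the $\lnp$ count is perfect versus imperfect, not ``finite or algebraically closed'' versus ``otherwise''; your proof is fine once the hypothesis in~(2) is read as ``$k$ imperfect''.
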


For $n > 2$, there are $k$-automorphisms of $G_k$ which are not inner. The results in this section are similar to those obtained by Wu in his Ph.D. thesis \cite{wu02}. We restate several results and refer to Wu for proofs, except when different proofs are needed due to the characteristic of $k$.

In this section, let $\theta(X) = (X^{-1})^T$. Note that $\theta$ is not an inner automorphism unless $n = 2$, in which case $\theta = \Inn_X$ where $X =\Lx{1}$. It is known that for $n > 2$, $\Inn(G, G_k)$ has index 2 in $\Aut(G,G_k)$. Hence we can write any outer automorphism of $G_k$ as $\theta \Inn_A$ for some $A \in \GL(n,k)$. For convenience, we choose $\theta$ as above; note that $\theta \in \Aut(G,G_k)$ has order 2, and thus $\theta$ is an involution. Our first step is to determine which of the automorphisms $\theta\Inn_A$ square to the identity.

\begin{lemma}[Wu \cite{wu02}, Lemma 20]\label{symmouter}%+
$\theta \Inn_A$ is an involution if and only if $A$ is symmetric.
\end{lemma}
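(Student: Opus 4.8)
The plan is to compute $(\theta\Inn_A)^2$ directly and reduce the condition that it equal the identity to a statement about $A$ alone, using \cref{pId}. First I would unwind the composition: for $X \in G$, since $\theta(X) = (X^{-1})^T = (X^T)^{-1}$ and $\Inn_A(X) = AXA^{-1}$, we get $\theta\Inn_A(X) = \big((AXA^{-1})^T\big)^{-1} = (A^{-1})^T X^T A^T)^{-1}$, wait — more carefully, $\theta\Inn_A(X) = \bigl((AXA^{-1})^{-1}\bigr)^T = (AX^{-1}A^{-1})^T = (A^{-1})^T (X^{-1})^T A^T = (A^T)^{-1} X^{-T} A^T$. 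So $\theta\Inn_A = \Inn_{(A^T)^{-1}} \circ \theta$ as maps on $G$. Applying this twice, $(\theta\Inn_A)^2 = \Inn_{(A^T)^{-1}}\theta\Inn_{(A^T)^{-1}}\theta$; since $\theta$ is an involution and $\theta\Inn_B\theta^{-1} = \Inn_{\theta(B)}$ for the appropriate action, I would track how $\theta$ conjugates the inner part. The cleaner route is to just compute $(\theta\Inn_A)^2(X)$ as a single expression in $X$, $A$, $A^T$.

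The key computation: $\theta\Inn_A(X) = (A^T)^{-1} X^{-T} A^T$ (writing $X^{-T}$ for $(X^{-1})^T = (X^T)^{-1}$). Applying $\theta\Inn_A$ again to $Y = (A^T)^{-1} X^{-T} A^T$: we need $(A^T)^{-1} Y^{-T} A^T$, and $Y^{-T} = \bigl((A^T)^{-1} X^{-T} A^T\bigr)^{-T}$. Now $(A^T)^{-T} = A^{-1}$ and $(X^{-T})^{-T} = X$ and $(A^T)^{-T}$... let me set $B = A^T$ so $Y = B^{-1} X^{-T} B$, and $Y^{-T} = (B^{-1})^{-T}(X^{-T})^{-T}(B)^{-T} = B^T X B^{-T}$. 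Wait, $(B^{-1})^{-T} = B^T$ and $B^{-T} = (B^{-1})^T$. So $Y^{-T} = B^T X (B^{-1})^T = A X (A^{-1})^T \cdot$ — being careful, $B^T = (A^T)^T = A$ and $(B^{-1})^T = (A^{-T})^T = A^{-1}$. Thus $Y^{-T} = A X A^{-1}$. Then $(\theta\Inn_A)^2(X) = B^{-1} Y^{-T} B = (A^T)^{-1} A X A^{-1} A^T$. Setting $M = A^{-1} A^T = A^{-1}(A^T)$, this is $(\theta\Inn_A)^2(X) = M^{-1} X M$, i.e. $(\theta\Inn_A)^2 = \Inn_{A^{-1}A^T}$ on $G_k$.

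Now I invoke \cref{pId}: $(\theta\Inn_A)^2 = \Id$ on $G_k$ if and only if $A^{-1}A^T = p\Id$ for some $0 \ne p \in K$, equivalently $A^T = pA$. Transposing, $A = pA^T = p^2 A$, so $p^2 = 1$, hence $p = 1$ (characteristic 2). Therefore the condition is exactly $A^T = A$, i.e. $A$ is symmetric. Conversely, if $A$ is symmetric then $A^{-1}A^T = \Id$ and $(\theta\Inn_A)^2 = \Id$; one should also note $\theta\Inn_A$ genuinely has order $2$ and not order $1$, but as an outer automorphism it cannot equal $\Id$ (since $\theta$ is outer for $n > 2$), so it is a bona fide involution.

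The main obstacle is purely bookkeeping: getting the transpose-inverse gymnastics right in the double composition, since it is easy to drop a transpose or an inverse. There is no conceptual difficulty — the single real input is \cref{pId}, and the characteristic-2 simplification $p = 1$ (rather than $p = \pm 1$) is exactly where this differs from the odd-characteristic treatment in \cite{wu02}, though it does not change the final criterion.
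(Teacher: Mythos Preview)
Your argument is correct. The paper itself supplies no proof of this lemma; it is quoted verbatim from Wu's thesis with the remark that results are restated ``and refer to Wu for proofs, except when different proofs are needed due to the characteristic of $k$.'' So there is nothing in the paper to compare against beyond the citation, and your direct computation of $(\theta\Inn_A)^2$ followed by an appeal to \cref{pId} is exactly the standard route one would expect Wu's proof to take.

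One small bookkeeping slip: you correctly derive $(\theta\Inn_A)^2(X) = (A^T)^{-1}A\,X\,A^{-1}A^T = M^{-1}XM$ with $M = A^{-1}A^T$, but then label this as $\Inn_{A^{-1}A^T}$ when it is in fact $\Inn_{(A^T)^{-1}A} = \Inn_{M^{-1}}$. This is harmless, since the condition ``$M = p\Id$'' and ``$M^{-1} = p\Id$'' are equivalent, and your subsequent deduction $A^T = pA \Rightarrow p^2 = 1 \Rightarrow p = 1$ (using characteristic $2$) goes through unchanged. The observation that $\theta\Inn_A$ cannot have order $1$ because it lies in the nontrivial coset of $\Inn(G,G_k)$ in $\Aut(G,G_k)$ for $n > 2$ is a nice point to include explicitly.
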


\begin{lemma}[Wu \cite{wu02}, Lemma 21]\label{outcong}%+
$\theta \Inn_A$ is isomorphic to $\theta \Inn_B$ if and only if $A$ is congruent to $pB$, for some $p \in k$. 
\end{lemma}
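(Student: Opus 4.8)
The plan is to mimic the proof of \cref{cacinv}: translate ``$\theta\Inn_A$ is isomorphic to $\theta\Inn_B$'' into a matrix equation by computing a conjugate of $\theta\Inn_A$ explicitly. By \cref{innAglnk}, every inner automorphism of $G_k$ is $\Inn_C$ for some $C\in\GL(n,k)$, so the two outer involutions are isomorphic precisely when $\Inn_C(\theta\Inn_A)\Inn_C^{-1}=\theta\Inn_B$ for some $C\in\GL(n,k)$. The one computation that is not purely formal is to see how $\theta$ absorbs an inner automorphism: evaluating on an arbitrary $X\in G$ and using $\theta(X)=(X^{T})^{-1}$ together with $(MN)^{T}=N^{T}M^{T}$ and $(M^{T})^{-1}=(M^{-1})^{T}$, one checks that $\Inn_C\circ\theta=\theta\circ\Inn_{(C^{T})^{-1}}$, where $(C^{T})^{-1}$ is the contragredient of $C$.

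Granting this, and using $\Inn_C\Inn_A=\Inn_{CA}$, we obtain
\[
\Inn_C(\theta\Inn_A)\Inn_C^{-1}=(\Inn_C\theta)\,\Inn_A\,\Inn_{C^{-1}}=\theta\,\Inn_{(C^{T})^{-1}AC^{-1}}.
\]
Hence $\theta\Inn_A$ is isomorphic to $\theta\Inn_B$ iff $\theta\,\Inn_{(C^{T})^{-1}AC^{-1}}=\theta\Inn_B$ for some $C$; composing on the left with $\theta=\theta^{-1}$ this becomes $\Inn_{(C^{T})^{-1}AC^{-1}}=\Inn_B$, i.e.\ $\Inn_{(C^{T})^{-1}AC^{-1}B^{-1}}=\Id$. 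Since $(C^{T})^{-1}AC^{-1}B^{-1}$ has entries in $k$, \cref{pId} forces $(C^{T})^{-1}AC^{-1}B^{-1}=p\Id$ with $p\in k$, necessarily nonzero, i.e.\ $(C^{T})^{-1}AC^{-1}=pB$. Rearranging gives $A=C^{T}(pB)C$, which is exactly the statement that $A$ is congruent to $pB$; conversely, any congruence $A=Q^{T}(pB)Q$ with $Q\in\GL(n,k)$ is realized by taking $C=Q$, since then $(C^{T})^{-1}AC^{-1}=pB$ and $\Inn_{pB}=\Inn_B$. This establishes the equivalence.

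I expect the only real obstacle to be bookkeeping with transposes and inverses — in particular placing the contragredient $(C^{T})^{-1}$ correctly — so the identity $\Inn_C\theta=\theta\Inn_{(C^{T})^{-1}}$ deserves to be verified by direct substitution. Note that the argument is characteristic-free (which is why the statement can be quoted from Wu), so \cref{symmouter} plays no role in the proof itself; it only guarantees that $A$ and $B$ are symmetric, making ``congruent'' the appropriate equivalence on the matrices in question. Finally, if ``isomorphic'' were read as $\Aut(G,G_k)$-isomorphic, an outer conjugator $\sigma=\theta\Inn_E$ would contribute at most the extra relation ``$A$ is congruent to a scalar multiple of $B^{-1}$'', which is redundant because a nondegenerate symmetric matrix $B$ is always congruent to $B^{-1}$ (via $B^{-1}=(B^{-1})^{T}B\,B^{-1}$); so the conclusion is unaffected.
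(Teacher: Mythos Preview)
Your argument is correct: the identity $\Inn_C\circ\theta=\theta\circ\Inn_{(C^{T})^{-1}}$ is verified exactly as you describe, and the rest is the same unwinding as in \cref{cacinv}, landing on $(C^{T})^{-1}AC^{-1}=pB$ via \cref{pId}. The paper itself gives no proof of this lemma---it simply cites Wu---so there is nothing in the present paper to compare against; your proof is precisely the outer-automorphism analogue of the paper's proof of \cref{cacinv}, which is the expected line of argument and almost certainly Wu's as well. Your final paragraph on the $\Aut(G,G_k)$ case (and the observation that a nondegenerate symmetric $B$ is congruent to $B^{-1}$ via $Q=B^{-1}$) is a nice addendum, though not required for the lemma as stated, since throughout the paper ``isomorphic'' means $\Inn(G,G_k)$-isomorphic.
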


Non-singular symmetric matrices are congruent to diagonal matrices if there is at least one non-zero diagonal entry. When all diagonal entries are zero, the matrix is called an alternate matrix. Note that when $k$ is not of characteristic two, this concept is equivalent to the matrix being skew-symmetric. Alternate matrices correspond to symplectic transformations, and non-singular alternate matrices form one congruence class. In summary,

\begin{lemma}\label{outclass}%+
Let $A$ be a symmetric, non-singular matrix over a field of characteristic 2. Then 
\begin{enumerate}
\item If $A$ is alternate, then $A$ is congruent to $J = \begin{bsmallmatrix}0 & I\\I & 0\end{bsmallmatrix}$, and $A$ has even dimension.
\item If $A$ is not alternate, then $A$ is congruent to a diagonal matrix.
\item Let $b_i \in k^*$ and $a_i \in k$, and let
\[
A =
\begin{bmatrix}
a_1 & &\\
& \ddots &\\
&& a_n
\end{bmatrix}
\text{ and }
B = 
\begin{bmatrix}
b_1^2a_1 & &\\
& \ddots &\\
&& b_n^2a_n
\end{bmatrix}.
\]
Then $A$ is congruent to $B$.
\end{enumerate}
\end{lemma}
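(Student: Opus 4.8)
The plan is to read $A$ as the Gram matrix of a symmetric bilinear form $B$ on $V=k^{n}$, so that congruence $P^{T}AP$ is a change of basis and the hypothesis "$A$ alternate" becomes $q(v):=B(v,v)=0$ for all $v\in V$. With this dictionary part (3) is immediate: if $D=\diag(b_{1},\dots ,b_{n})$, which is invertible since each $b_{i}\in k^{*}$, then $D^{T}AD=DAD=\diag(b_{1}^{2}a_{1},\dots ,b_{n}^{2}a_{n})=B$. So the real content is parts (1) and (2), which describe the two possible shapes of a non-singular symmetric form in characteristic $2$.

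For part (1) I would build a symplectic basis by the standard induction. Non-degeneracy lets me pick, for any $e_{1}\neq 0$, a vector $f_{1}$ with $B(e_{1},f_{1})=1$ (rescale any vector not orthogonal to $e_{1}$); alternateness forces $e_{1},f_{1}$ to be independent, and the plane $P_{1}=\langle e_{1},f_{1}\rangle$ is non-degenerate with Gram matrix $\begin{bsmallmatrix}0&1\\1&0\end{bsmallmatrix}$, so $V=P_{1}\perp P_{1}^{\perp}$ with $P_{1}^{\perp}$ again non-degenerate and alternate. Iterating lowers the dimension by $2$ at each step, which simultaneously shows $n$ is even and yields a basis $e_{1},f_{1},\dots ,e_{m},f_{m}$; reordering it as $e_{1},\dots ,e_{m},f_{1},\dots ,f_{m}$ turns the Gram matrix into $J=\begin{bsmallmatrix}0&I\\I&0\end{bsmallmatrix}$, with no sign corrections since the characteristic is $2$. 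This is the classical structure theorem for alternating spaces and could instead simply be cited.

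For part (2) the key characteristic-$2$ fact is $q(v+w)=q(v)+q(w)+2B(v,w)=q(v)+q(w)$, so the isotropic set $V_{0}=\{v:q(v)=0\}$ is a subspace, and "$A$ not alternate" says exactly $V_{0}\neq V$. I would then induct on $n$. Pick $v$ with $q(v)=a\neq 0$, so $V=\langle v\rangle\perp v^{\perp}$ with $v^{\perp}$ non-degenerate of dimension $n-1$. If $v^{\perp}$ is not alternate, the inductive hypothesis gives it an orthogonal basis and adjoining $v$ finishes. If $v^{\perp}$ is alternate it has even dimension $\ge 2$, so by part (1) it contains a hyperbolic plane, hence $V\cong\langle v\rangle\perp H\perp W$ with $H=\langle e,f\rangle$, $q(e)=q(f)=0$, $B(e,f)=1$, and $W$ non-degenerate. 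The crucial sublemma is that in characteristic $2$ the $3$-dimensional space $\langle v\rangle\perp H$ is diagonalizable: one checks that $\{v+e,\ v+af,\ v+e+af\}$ is an orthogonal basis on which $q\equiv a$, so $\langle v\rangle\perp H\cong\diag(a,a,a)$. Granting this, $V\cong\diag(a,a,a)\perp W$ is not alternate; taking a norm-$a$ basis vector $u$ of the $\diag(a,a,a)$ block, $u^{\perp}$ has dimension $n-1$ and is not alternate (its $\diag(a,a)$ summand has norm-$a$ vectors), so the inductive hypothesis applies to $u^{\perp}$ and, with $u$, produces an orthogonal basis of $V$; that is, $A$ is congruent to a diagonal matrix.

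The main obstacle is precisely the alternate-complement case of part (2): the naive "split off a nonisotropic line and recurse" argument fails because $v^{\perp}$ can be alternate and then offers no nonisotropic vector to continue with. The remedy is the sublemma $\langle v\rangle\perp H\cong\diag(a,a,a)$, which trades the obstructing hyperbolic plane for three nonisotropic lines; verifying that the three displayed vectors are orthogonal, nonisotropic, and linearly independent is the one genuine computation, and it is short. Everything else — parts (1) and (3) and the bookkeeping in the induction — is routine linear algebra over a field, the only care needed being to note where characteristic $2$ is actually used: the vanishing of the cross terms $2B(v,w)$, the resulting subspace property of $V_{0}$, and the absence of sign twists in $J$.
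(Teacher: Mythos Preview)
Your proof is correct. For part (3) you do exactly what the paper does: conjugate by $D=\diag(b_1,\dots,b_n)$. For parts (1) and (2) the paper simply cites standard references (Roman for the symplectic normal form, Albert for the diagonalizability of non-alternate symmetric forms in characteristic~2), whereas you supply self-contained arguments.

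Your part (1) is the standard hyperbolic-plane induction and needs no comment. Your part (2) is essentially Albert's argument: the crux is the sublemma $\langle v\rangle\perp H\cong\diag(a,a,a)$, which you identify correctly as the device that rescues the induction when $v^\perp$ happens to be alternate. The explicit basis $\{v+e,\ v+af,\ v+e+af\}$ checks out: each has norm $a$, the pairwise inner products are $a+a=0$ in characteristic~2, and the determinant of the coordinate matrix in the basis $\{v,e,f\}$ is $a\neq 0$. Your closing induction step---passing from $\diag(a,a,a)\perp W$ to $u^\perp=\diag(a,a)\perp W$, which is non-alternate of dimension $n-1$---is clean.

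So the difference is purely expository: the paper outsources (1) and (2) to the literature, while you reproduce the classical proofs. What your route buys is self-containment and an explicit demonstration of where characteristic~2 enters (additivity of $q$, absence of sign issues in $J$); what the paper's route buys is brevity.
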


\begin{proof}
~
\begin{enumerate}
\item See Roman \cite{rom08}. % 
\item Albert \cite{alb38}. % Theorem 6, page 392
\item Set $Q = \diag(b_1, \dots, b_n)$, and notice that $Q^TAQ = B$.
\end{enumerate}
\end{proof}

In characteristic 2, isomorphism classes of outer involutions are somewhat simpler. To summarize our work thus far, outer involutions can always be represented as $\theta\Inn_A$ where $A$ is either a diagonal matrix or the matrix $J$. We can always write $A$ as $\diag(1, \dots, 1, N_{p_1}, \dots, N_{p_r})$ where $N_{p_i}$ are (possibly distinct) non-squares in $k$. In this case, whenever $k$ has only one square class $A = \Id$.  The following theorem summarizes the isomorphism classes.

\begin{theorem}\label{slnkchar}%+
Let $\theta \Inn_A$ be an (outer) $k$-involution.
\begin{enumerate}
\item If $k$ is finite, or if $k$ is algebraically closed, there is one isomorphism class of outer $k$-involutions corresponding to symmetric non-alternate matrices. There is also one isomorphism class of outer $k$-involutions corresponding to alternate matrices.
\item If $k$ is infinite and not algebraically closed, there are infinitely many classes of outer $k$-involutions corresponding to symmetric non-alternate matrices. There is one isomorphism class of outer $k$-involutions corresponding to alternate matrices.
\end{enumerate}
\end{theorem}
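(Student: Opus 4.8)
The plan is to translate the statement into a counting problem about non-singular symmetric matrices over $k$ and then read off the answer from \cref{outclass}. By \cref{symmouter} the outer $k$-involutions are exactly the automorphisms $\theta\Inn_A$ with $A\in\GL(n,k)$ symmetric, and by \cref{outcong} two such involutions $\theta\Inn_A$ and $\theta\Inn_B$ are $\Inn(G,G_k)$-isomorphic if and only if $A$ is congruent to $pB$ for some $p\in k^{*}$. So the isomorphism classes of outer $k$-involutions biject with the orbits of non-singular symmetric $n\times n$ matrices under the action $(Q,p)\cdot A = p\,Q^{T}AQ$ of $\GL(n,k)\times k^{*}$, and the theorem is a statement about the number of these orbits.

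First I would note that the action preserves the alternate/non-alternate dichotomy: congruence fixes the vanishing of diagonal entries, and $pA$ has a zero in the same diagonal positions as $A$ since $p\ne 0$. Hence the orbit set splits as (alternate orbits) $\sqcup$ (non-alternate orbits), to be treated independently. For the alternate matrices, \cref{outclass}(1) says every non-singular alternate matrix is congruent to $J$ (and, incidentally, that they exist only for $n$ even), so there is a single orbit; this is the ``one isomorphism class corresponding to alternate matrices'' in both (1) and (2). For the non-alternate matrices, \cref{outclass}(2) lets me replace $A$ by a diagonal matrix $\diag(a_1,\dots,a_n)$ with each $a_i\in k^{*}$, and \cref{outclass}(3) shows each $a_i$ may be rescaled independently by a square; together with the global scaling by $p$, the orbit of $\diag(a_1,\dots,a_n)$ depends only on the multiset of square classes $\{a_i(k^{*})^{2}\}$ up to a simultaneous twist by an element of $k^{*}/(k^{*})^{2}$. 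When $k$ is finite (by \cref{serrelemma}) or algebraically closed there is one square class, so every non-singular non-alternate $A$ is congruent to $\Id$ and $\theta\Inn_A$ is isomorphic to $\theta=\theta\Inn_{\Id}$; that is exactly one class, which proves part (1).

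For part (2) the only remaining content is that when $k$ is infinite and not algebraically closed there are infinitely many non-alternate orbits. The main invariant I would use is the discriminant: if $A=p\,Q^{T}BQ$ then $\det A = p^{n}(\det Q)^{2}\det B$, so the image of $\det A$ in $k^{*}\big/\big((k^{*})^{2}\langle p^{n}:p\in k^{*}\rangle\big)$ is an orbit invariant. For $n$ even this quotient is just $k^{*}/(k^{*})^{2}$, which is infinite here (for instance the square classes of $x,\,x^{3}+1,\,x^{5}+1,\dots$ over $\bbf_q(x)$), and the matrices $\diag(c,1,\dots,1)$, as $c$ runs over square-class representatives, realize infinitely many distinct discriminants and hence lie in distinct orbits. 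For $n$ odd the discriminant degenerates, and I would refine it by the ``quasilinear part'' of the associated bilinear form, namely the $k^{2}$-subspace $a_1k^{2}+\dots+a_nk^{2}\subseteq k$ of values $\{b_A(v,v):v\in k^{n}\}$, which is intrinsic to the congruence class and is merely rescaled by $p$; over an imperfect $k$ this subspace (up to scaling) takes infinitely many values once $n$ is large relative to $[k:k^{2}]$, again giving infinitely many orbits.

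I expect the real obstacle to be precisely this non-isomorphism direction in part (2): the upper bounds fall straight out of \cref{outclass}, but showing that a given infinite family of forms genuinely consists of distinct orbits is delicate in characteristic $2$, because symmetric bilinear forms here admit congruences with no analogue in odd characteristic — for example $\langle a,b\rangle\cong\langle a+b,\,ab(a+b)\rangle$ whenever $a+b\ne 0$ — so one must verify that the chosen representatives are not covertly identified, and the odd-$n$ case forces the auxiliary invariant above (and a correspondingly careful reading of ``infinitely many'' in terms of independent square classes). This is exactly the phenomenon that distinguishes the characteristic $2$ outer case from \cite{wu02}, where the diagonal discriminant is a complete invariant; accordingly the argument must rely on the characteristic-$2$ normal form of \cref{outclass} rather than on the odd-characteristic bookkeeping.
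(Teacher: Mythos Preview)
The paper does not give a proof of this theorem; it is stated as a summary of \cref{symmouter}, \cref{outcong}, and \cref{outclass}, with the preceding paragraph recording only that every outer involution has the form $\theta\Inn_A$ with $A$ congruent either to $J$ or to $\diag(1,\dots,1,N_{p_1},\dots,N_{p_r})$ for non-squares $N_{p_i}$, and that this collapses to $A=\Id$ when $k$ has one square class. Your treatment of part~(1) and of the alternate half of part~(2) is therefore exactly the paper's (implicit) argument.

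Where you diverge is in the non-alternate half of part~(2): you actually supply invariants to separate orbits, which the paper does not attempt. Your discriminant argument for even $n$ is correct and is a genuine addition: since $\det(p\,Q^{T}BQ)=p^{n}(\det Q)^{2}\det B$ and $p^{n}\in(k^{*})^{2}$ when $n$ is even, the square class of $\det A$ distinguishes the matrices $\diag(c,1,\dots,1)$. Your odd-$n$ invariant (the $k^{2}$-span $\sum a_ik^{2}$ modulo scaling) is the right object to look at, but as you yourself anticipate it need not separate infinitely many orbits when $[k:k^{2}]$ is small --- for instance over $\bbf_{2}(t)$ one has $[k:k^{2}]=2$, so for $n=3$ this invariant takes only two values --- so the odd-$n$ case remains open in your write-up, just as it is unaddressed in the paper. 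One further caveat that affects both your argument and the theorem as stated: ``infinite and not algebraically closed'' does not force $|k^{*}/(k^{*})^{2}|>1$ in characteristic~$2$ (the perfect closure of $\bbf_{2}(t)$ is a counterexample), so the hypothesis in~(2) should really be read as ``$k$ imperfect''; with that reading your even-$n$ argument goes through, since an infinite imperfect field of characteristic~$2$ has $k^{*}/(k^{*})^{2}$ infinite.
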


We now turn to fixed point groups.

\begin{lemma}
~
\begin{enumerate}
\item The fixed point group of the involution $\Inn_{L_{m,c^2,c}}$ consists of matrices in $G_k$ of the form
$$
\begin{bmatrix}
\vspace{-.4cm}\\
\begin{bsmallmatrix}a_{1,1} & b_{1, 1}\\ c^2b_{1,1} & a_{1,1}\end{bsmallmatrix} & \dots & \begin{bsmallmatrix}a_{1,2m-1} & b_{1,2m-1}\\c^2b_{1,2m-1} & a_{1,2m-1}\end{bsmallmatrix} & \begin{smallmatrix}h_{1, 2m} & \dots & h_{1, n}\\ch_{1, 2m} & \dots & ch_{1, n}\end{smallmatrix}\\
\vdots & &\vdots & \vdots\\
\begin{bsmallmatrix}a_{2m,1} & b_{2m, 1} \\c^2b_{2m-1,1} & a_{2m-1,1} \end{bsmallmatrix} & \dots & \begin{bsmallmatrix}a_{2m-1,2m-1} & b_{2m-1,2m-1}\\c^2b_{2m-1,2m-1} & a_{2m-1,2m-1}\end{bsmallmatrix} & \begin{smallmatrix}h_{2m, 2m} & \dots & h_{2m, n}\\ch_{2m, 2m} & \dots & ch_{2m, n}\end{smallmatrix}\\\\
\begin{smallmatrix} g_{2m,1} & cg_{2m, 1} \\\vdots & \vdots\\g_{n,1} & cg_{n,1} \end{smallmatrix} & \dots &
\begin{smallmatrix} g_{2m,2m-1} & cg_{2m,2m-1} \\ \vdots & \vdots\\g_{n,2m-1} & cg_{n,2m-1} \end{smallmatrix} &
\begin{smallmatrix} s_{2m, 2m} & \dots & s_{2m, n}\\  \vdots & & \vdots\\ s_{n, 2m} & \dots & s_{n, n}\end{smallmatrix}
\end{bmatrix}
$$
which has $m$ $2 \times 2$ blocks in the upper left-hand corner. The upper right-hand corner has the property that the even rows are $c$-multiples of the preceding odd rows. Analogously, the bottom left-hand corner has the property that the even columns are $c$-multiples of the preceding odd columns. Finally, in the bottom right-hand corner, there are no relations.
\item The fixed point group of the involution $\Inn(L_{\frac{n}{2}, p})$ consists of the block matrices 
$$
\begin{bmatrix}
\vspace{-.4cm}\\
\begin{bmatrix} a_{1,1} & b_{1,1}\\pb_{1,1} & a_{1,1}\end{bmatrix} & \dots & \begin{bmatrix} a_{1,n-1} & b_{1,n-1}\\pb_{1,n-1} & a_{1,n-1}\end{bmatrix}\\
\vdots & & \vdots\\
\begin{bmatrix} a_{n-1,1} & b_{n-1,1}\\pb_{n-1,1} & a_{n-1,1}\end{bmatrix} & \dots & \begin{bmatrix} a_{n-1,n-1} & b_{n-1,n-1}\\pb_{n-1,n-1} & a_{n-1,n-1}\end{bmatrix}\\
\vspace{-.4cm}
\end{bmatrix}.
$$
This matrix consists of $2\times 2$ blocks, and is basically the same as in case (1) above, in the upper left-hand corner.
\end{enumerate}
\end{lemma}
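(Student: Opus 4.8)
Both statements describe the fixed point group $H = \{X \in G_k \st \Inn_A(X) = X\}$ of $\Inn_A$, and since $\Inn_A(X) = AXA^{-1}$, this group is exactly the centralizer $Z_{\SL(n,k)}(A)$. So in each case the plan is to solve the linear matrix equation $AX = XA$ by exploiting the block-diagonal shape of the representative $A$, and then to intersect the resulting space of solutions with the hypersurface $\det X = 1$. The displayed matrices are really descriptions of $Z_{\GL(n,k)}(A)$, with the single condition $\det X = 1$ (for membership in $\SL$ rather than $\GL$) understood.

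For part (2), take $A = L_{\frac{n}{2},p} = \diag(L_p,\dots,L_p)$ with $\frac n2$ diagonal blocks and partition $X$ conformally into $2\times 2$ blocks $X_{ij}$. Because $A$ is block diagonal with the same block $L_p$ repeated, $AX = XA$ decouples completely into the conditions $L_p X_{ij} = X_{ij} L_p$, one for each $(i,j)$; that is, every block $X_{ij}$ must centralize $L_p$ in $M_2(k)$. A one-line computation with $L_p = \Lx{p}$ shows the centralizer of $L_p$ is $\{\,\begin{bsmallmatrix} a & b\\ pb & a\end{bsmallmatrix} \st a,b\in k\,\}$; equivalently it equals the subalgebra $k[L_p]$, and this is the \emph{full} centralizer because $L_p$ is nonderogatory (its minimal polynomial $x^2+p$ already has degree $2$). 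This yields precisely the displayed block form.

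For part (1), write $A = L_{m,c^2,c} = \begin{bsmallmatrix} L & 0\\ 0 & cI\end{bsmallmatrix}$ with $L = \diag(L_{c^2},\dots,L_{c^2})$ of size $2m$ and $cI$ of size $n-2m$, and partition $X = \begin{bsmallmatrix} X_{11} & X_{12}\\ X_{21} & X_{22}\end{bsmallmatrix}$ accordingly. Then $AX = XA$ separates into $LX_{11} = X_{11}L$, $LX_{12} = cX_{12}$, $cX_{21} = X_{21}L$, and the vacuous $cX_{22} = cX_{22}$; the last leaves $X_{22}$ unconstrained, which is the ``no relations in the bottom right-hand corner'' assertion. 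The first equation is handled exactly as in part (2) after a further $2\times 2$ partition of $X_{11}$: each sub-block must centralize $L_{c^2}$, hence has the form $\begin{bsmallmatrix} a & b\\ c^2 b & a\end{bsmallmatrix}$, giving the $m$ upper-left blocks. For $X_{12}$ and $X_{21}$ one uses that $c$ is the only eigenvalue of $L_{c^2}$ (its characteristic polynomial is $(x+c)^2$ in characteristic $2$): cutting $X_{12}$ into $2\times(n-2m)$ strips indexed by the row-pairs of $L$ reduces $LX_{12}=cX_{12}$ to eigenvector equations $L_{c^2} v = c v$ on the columns of each strip, forcing a $c$-proportionality between the even and odd rows of the strip; an analogous transposed computation on $X_{21}$, cut into $(n-2m)\times 2$ strips by the column-pairs of $L$, pins down the corresponding $c$-proportionality between its odd and even columns. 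Assembling the four pieces and imposing $\det X = 1$ gives the claimed description.

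The argument is essentially block bookkeeping, so I do not expect a genuine obstacle; the one point that actually carries content is checking that the constraints found are the \emph{only} ones — that the listed matrices exhaust the centralizer rather than merely sitting inside it — and this is exactly where the nonderogatory-ness of $L_p$ and $L_{c^2}$ (equivalently, that $x^2+p$ and $(x+c)^2$ are their minimal polynomials) is used. The remaining care is just in keeping the row-pair/column-pair indexing consistent and in remembering that the displayed arrays suppress the global determinant-one condition.
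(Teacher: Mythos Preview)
Your argument is correct and is essentially the same computation the paper carries out: the paper phrases the solution of $AX=XA$ in terms of the row operations induced by left multiplication by $L_{m,c^2,c}$ and the column operations induced by right multiplication by $L_{m,c^2,c}^{-1}$, then equates entries, which is exactly your block-by-block reduction to the conditions $L_{c^2}X_{ij}=X_{ij}L_{c^2}$, $L_{c^2}v=cv$, and $wL_{c^2}=cw$. Your invocation of nonderogatoriness to justify that the listed matrices \emph{exhaust} the centralizer is a nice explicit touch that the paper leaves implicit in ``equate entries.''
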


\begin{proof}
~
\begin{enumerate}
\item The matrix $L_{m,c^2,c}$ acts on a matrix $A$ when multiplying on the left and on the right. Let $R_i$ denote the $i$th row of $A$, and $C_i$ denote the $i$th column of $A$. Multiplication on the left by $L_{m,c^2,c}$ changes $A$ as follows:
\begin{itemize} 
\item Replace $R_i$ by $c^2R_{i-1}$ when $i$ is even and $1 < i \le 2m$. 
\item Replace $R_j$ by $R_{j + 1}$ when $j$ is odd and $1 \le j < 2m$.
\item Replace $R_k$ by $cR_k$ for $k > 2m$.
\end{itemize}
Multiplication on the right by $L_{m,c^2,c}^{-1}$ changes $A$ as follows:
\begin{itemize} 
\item Replace $C_i$ by $\frac{1}{c^2}C_{i-1}$ when $i$ is even and $1 < i \le 2m$. 
\item Replace $C_j$ by $C_{j + 1}$ when $j$ is odd and $1 \le j < 2m$.
\item Replace $C_k$ by $\frac{1}{c}C_k$ for $k > 2m$.
\end{itemize}
Now equate entries to get the desired relations. 
\item This is simpler than case 1, and the result corresponds to the upper left-hand corner of the matrix in case 1.
\end{enumerate}
\end{proof}

The matrix $L_{m,c^2,c}$ is conjugate to a constant times a unipotent matrix. Let  
$$
U_{m,c} = 
\begin{bmatrix}
\vspace{-.43cm}\\
\Ux{c}\\
& \ddots\\
& & \Ux{c}\\
& & & 1\\
& & & & \ddots\\
& & & & & 1
\end{bmatrix}.
$$
Then $U_{m,c} L_{m,c^2,c} U_{m,c} = cB$ where
\begin{equation}\label{B1}%+
B = 
\begin{bmatrix}
\vspace{-.43cm}\\
1 & c^{-1}\\
& \ddots & \ddots\\
& & 1 & c^{-1}\\
& & & 1\\
& & & & \ddots\\
& & & & & 1
\end{bmatrix}.
\end{equation}
By a similar method, $L_{\frac{n}{2},p}$ is conjugate (over $G_K$) to a matrix of the form $\sqrt{p}B$ where
\begin{equation}\label{B2}%+
B = \begin{bmatrix}
1 & \sqrt{p}^{-1}\\
& \ddots & \ddots\\
& & \ddots & \sqrt{p}^{-1}\\
& & & 1
\end{bmatrix}.
\end{equation}
In both cases, the matrices of $k$-involutions are conjugate to unipotent matrices. Over fields of characteristic not 2, inner 
involutions are always semisimple. 

%automorphisms corresponding to unipotent matrices have solvable fixed point groups, but it is an open question as to whether that is true for fields of characteristic 2. We showed in \cref{sl2k} that this is true for $n = 2$.

\begin{lemma}
Let $\varphi = \theta\Inn_A$ as in \cref{outclass}. Then
\begin{enumerate}
\item If $A$ is symmetric and not alternate and $|k^*/(k*)^2| = 1$, then the fixed-point group of $\varphi$ is $\Orth(n,k)$.
\item If $A$ is symmetric and not alternate and $|k^*/(k*)^2| \ge 2$, then the fixed-point group of $\varphi$ is the set 
\[
\{X \in G_k\ |\ A = X^TAX\}.
\]
\item If $A$ is symmetric and alternate, then the fixed point group of $\varphi$ is the set
\[
\{X \in G_k\ |\ X = JXJ\}.
\]
In other words, the fixed-point group consists of the block-symmetric matrices
\[
X = \begin{bmatrix} X_1 & X_2\\X_2 & X_1\end{bmatrix}.
\]
\end{enumerate}
\end{lemma}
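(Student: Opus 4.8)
The plan is to turn the fixed-point condition for $\varphi=\theta\Inn_A$ into a single matrix equation, and then specialize $A$ to the three congruence normal forms supplied by \cref{outclass}. First I would compute the fixed-point condition directly: since $\theta^2=\Id$, a matrix $X\in G_k$ satisfies $\varphi(X)=X$ exactly when $\Inn_A(X)=\theta(X)$, i.e.\ $AXA^{-1}=(X^{-1})^T=(X^T)^{-1}$; right-multiplying by $A$ and then left-multiplying by $X^T$ rewrites this as $X^TAX=A$. Hence the fixed-point group of $\theta\Inn_A$ is $\{X\in G_k\st X^TAX=A\}$ for any symmetric nonsingular $A$. One remark worth recording: $X^TAX=A$ already forces $\det(X)^2=1$, hence $\det X=1$ in characteristic $2$, so the clause ``$X\in G_k$'' is vacuous in cases (1) and (2) but a genuine restriction in case (3).

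Case (2) is then immediate: by hypothesis $A$ is the symmetric, non-alternate representative of \cref{outclass}, and since $k$ has more than one square class it cannot be normalized further, so the fixed-point group is precisely $\{X\in G_k\st A=X^TAX\}$.

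For case (1), \cref{outclass}(2) produces $P\in\GL(n,k)$ with $P^TAP$ a nonsingular diagonal matrix, and \cref{outclass}(3), together with the hypothesis $|k^*/(k^*)^2|=1$ (so every element of $k^*$ is a square), lets me rescale all diagonal entries to $1$; thus we may take the representative $A$ from \cref{outclass} to be $\Id$, and the fixed-point group becomes $\{X\in G_k\st X^TX=\Id\}=\Orth(n,k)$, the last equality holding because $X^TX=\Id$ forces $\det X=1$. (For a non-normalized non-alternate $A$ one would instead combine \cref{outcong} with the outer analogue of \cref{isoH} — whose proof carries over verbatim to any two involutions in $\Aut(G,G_k)$ — to conclude only that the fixed-point group is isomorphic to $\Orth(n,k)$.)

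For case (3), \cref{outclass}(1) makes the representative $A=J=\begin{bsmallmatrix}0&I\\I&0\end{bsmallmatrix}$ (and $n$ even), so the fixed-point group is $\{X\in G_k\st X^TJX=J\}$; it remains to rewrite this one equation, using the characteristic-$2$ identities $J^T=J$ and $J^2=\Id$, into the block description in the statement — partition $X$ into four $\tfrac{n}{2}\times\tfrac{n}{2}$ blocks and read off the relations among them. This block bookkeeping is the only real computation in the argument and the step I expect to demand the most care; everything preceding it is formal, resting on the single equivalence $\varphi(X)=X\Leftrightarrow X^TAX=A$ and the congruence normal forms already in hand from \cref{outclass}.
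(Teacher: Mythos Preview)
Your derivation of the single fixed-point equation $X^TAX=A$ is exactly what the paper does in cases (1) and (2): it takes $A=\Id$ in (1) to get $\Orth(n,k)$, and in (2) rewrites $\theta(AXA^{-1})=X$ as $AX^{-1}=X^TA$, i.e.\ $A=X^TAX$. So for those two parts your proposal and the paper coincide.

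The trouble is case (3). You correctly obtain $X^TJX=J$, and then say that the remaining ``block bookkeeping'' will produce the block-symmetric form $X=\begin{bsmallmatrix}X_1&X_2\\X_2&X_1\end{bsmallmatrix}$. It will not: the block-symmetric description is equivalent to $JXJ=X$ (i.e.\ $X$ commutes with $J$), which is a different condition from $X^TJX=J$. For instance $X=\begin{bsmallmatrix}I&I\\0&I\end{bsmallmatrix}$ satisfies $X^TJX=J$ in characteristic $2$ but is not block-symmetric, while $X=\begin{bsmallmatrix}A&0\\0&A\end{bsmallmatrix}$ with $A^TA\neq I$ is block-symmetric but fails $X^TJX=J$. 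So the computation you are deferring cannot close the gap.

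If you look at the paper's own argument for (3), you will see why: it writes ``$\Inn_J(X)=JXJ^{-1}=JXJ=X$'' and stops --- the $\theta$ has silently disappeared. The block-symmetric description in the statement is the fixed-point group of $\Inn_J$, not of $\theta\Inn_J$. Your honest derivation of $X^TJX=J$ is the correct fixed-point condition for $\varphi=\theta\Inn_J$; it simply does not agree with what part (3) of the lemma asserts. (Your side remark that ``$X\in G_k$'' is a genuine restriction in case (3) is also off for the same reason: $X^TJX=J$ forces $\det X=1$ just as in the other cases; the determinant constraint only becomes nontrivial once one passes --- incorrectly --- to the condition $JXJ=X$.)
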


\begin{proof}
~
\begin{enumerate}
\item By \cref{outclass}, $A$ is diagonal, and we may as well take $A = \Id$ as a representative. Thus $\theta\Inn_A(X) =  \theta(X) = (X^{-1})^T = X$, so $X \in \Orth(n,k)$.
\item Since $\theta\Inn_A(X) = \theta(AXA^{-1}) = X$, then $X = A^{-1}(X^{-1})^TA$, and $AX^{-1} = X^TA$.
\item Write $J = \begin{bsmallmatrix}0 & I\\ I & 0\end{bsmallmatrix}$ and $X = \begin{bsmallmatrix}X_1 & X_2\\ X_3 & X_4\end{bsmallmatrix}$. Then $\Inn_J(X) = JXJ^{-1} = JXJ = X$.
\end{enumerate}
\end{proof}

For the involutions $\Inn_{L_{m,c^2,c}}$ and $\Inn_{L_{\frac{n}{2},p}}$, $Q_k$ has the following structure:
\begin{align*}
Q_k &= \left\{X \big(\Inn_A(X)\big)^{-1} \st X \in G_k\right\}\\
&= \left\{X(A^T)^{-1}X^TA^T \st X \in G_k\right\}.
\end{align*}
In the case that the involutions are outer, since $A = A^T$, we have
\begin{align*}
Q_k &= \left\{X \big(\theta\Inn_A(X)\big)^{-1} \st X \in G_k\right\}\\
&= \left\{X\theta(AXA^{-1})^{-1} \st X \in G_k\right\}\\
&= \left\{X\big((A^T)^{-1}(X^T)^{-1}A^T   \big)^{-1} \st X \in G_k\right\}\\
&= \left\{X A^TX^T(A^T)^{-1} \st X \in G_k\right\}\\
&= \left\{X AX^TA^{-1} \st X \in G_k\right\}
\end{align*}
In the case that $A = \Id$,  $Q_k$ is the set of symmetric matrices in $G_k$. 

%\nocite*
\bibliographystyle{plain}
\bibliography{slnk}

\end{document}